\documentclass[a4paper, 11pt]{amsart}
\pdfoutput=1 

\usepackage[T1]{fontenc}
\usepackage{lmodern}
\usepackage{microtype}
\usepackage[margin=2.5cm]{geometry}

\usepackage[style=numeric, citestyle=numeric-comp, backend=biber, sorting=nyt, maxbibnames=99,
            isbn=false]{biblatex}
\bibliography{refs}

\appto{\bibsetup}{\raggedright}  
\DeclareFieldFormat{doi}{%
  \mkbibacro{DOI}\addcolon\nobreakspace
  \ifhyperref
    {\href{https://doi.org/#1}{\nolinkurl{#1}}}
    {\nolinkurl{#1}}}

\usepackage{enumitem}

\usepackage{caption}
\usepackage{wrapfig}

\usepackage{url}
\usepackage[dvipsnames]{xcolor}

\definecolor{darkred}{RGB}{160,0,0}
\definecolor{darkpurple}{RGB}{120,0,120}
\definecolor{lightpurple}{RGB}{140,50,150}
\definecolor{darkblue}{RGB}{0,0,160}

\usepackage{tikz}
\usetikzlibrary{cd}

\usepackage{mathrsfs}

\usepackage[cachedir=minted]{minted}
\colorlet{CodeBackground}{black!5!white}
\setminted{encoding=utf8, mathescape, bgcolor=CodeBackground, fontsize=\footnotesize}
\setminted[macaulay2]{label={\TT{Macaulay2}}}

\usepackage{bm}
\usepackage{centernot}
\usepackage{dsfont}
\renewcommand{\mathbb}[1]{\mathds{#1}}

\usepackage{sansmath}

\newcommand{\NN}{\mathbb N}
\newcommand{\ZZ}{\mathbb Z}
\newcommand{\QQ}{\mathbb Q}

\newcommand{\PP}{\mathbb P}
\NewDocumentCommand{\FF}{e{_}}{\mathbb F\IfValueT{#1}{_{\mkern-3mu#1}}}
\newcommand{\card}[1]{\lvert#1\rvert}
\newcommand{\ol}[1]{\overline{#1}}

\newcommand{\supp}{\operatorname{supp}}
\newcommand{\tinv}{\mathsf{t}}
\newcommand{\Exp}{\operatorname{Exp}}
\newcommand{\mideal}{\mathfrak m}
\newcommand{\nideal}{\mathfrak n}

\usepackage{mathtools}
\newcommand{\defas}{\coloneqq}

\newcommand{\Set}[1]{\left\{\,#1\,\right\}}

\usepackage{amsmath, amssymb, amsthm}
\renewcommand{\to}{\longrightarrow}
\renewcommand{\emptyset}{\varnothing}

\usepackage{hyperref}
\hypersetup{
  colorlinks,
  citecolor=lightpurple,
  filecolor=black,
  linkcolor=darkblue,
  urlcolor=darkblue
}
\theoremstyle{definition}
\newtheorem{theorem}{Theorem}[section]
\newtheorem{lemma}[theorem]{Lemma}
\newtheorem{proposition}[theorem]{Proposition}

\newtheorem{remark}[theorem]{Remark}
\newtheorem{corollary}[theorem]{Corollary}

\newtheorem{definition}[theorem]{Definition}

\newcommand{\TodoColor}[2]{\colorlet{TodoColor#1}{#2}}
\TodoColor{Default}{darkred}
\TodoColor{Outline}{darkblue}
\TodoColor{Thoughts}{darkpurple}

\NewDocumentEnvironment{TodoList}{oO{TODO\IfNoValueF{#1}{~(#1)}}+b}{%
  \begingroup%
  \noindent\sffamily\color{TodoColor\IfNoValueTF{#1}{Default}{#1}}%
  \textbf{#2:}
  \ifthenelse{\isundefined{\notodo}}{%
    \begin{itemize}
    #3
    \end{itemize}%
  }{[...]}%
  \endgroup%
}{}

\usepackage{xspace}
\NewDocumentCommand{\todo}{oO{TODO\IfNoValueF{#1}{~(#1)}}+m}{%
  \begingroup%
  \noindent\sffamily\color{TodoColor\IfNoValueTF{#1}{Default}{#1}}%
  \textbf{#2:} #3%
  \endgroup%
  \xspace%
}

\title{Trinomial containment in polynomial ideals is undecidable}

\author{Tobias Boege}
\address[Tobias Boege]{
UiT The Arctic University of Norway, Tromsø, Norway}
\email{post@taboege.de}

\author{Anna Hofer}
\address[Anna Hofer]{Otto-von-Guericke-Universität, Magdeburg, Germany}
\email{anna.hofer@ovgu.de}

\author{Thomas Kahle}
\address[Thomas Kahle]{Otto-von-Guericke-Universität, Magdeburg, Germany}
\email{thomas.kahle@ovgu.de}

\date{\today}

\subjclass[2020]{Primary
  03D35; 
  secondary
  03D25, 
  11D09, 
  11U05, 
  13F20, 
  13P10} 
\keywords{fewnomial, sparse polynomial, ideal membership, Hilbert's tenth
problem, computably enumerable, Pell equation}

\begin{document}

\begin{abstract}
  We prove that deciding whether an ideal in a polynomial ring
  contains a trinomial is impossible on a Turing machine.  More
  precisely, from an integer polynomial $P$ we compute generators of
  an ideal $I_P$ in a polynomial ring over $\QQ$ such that $I_P$
  contains a trinomial if and only if $P$ has an integral zero. By the
  MRDP~theorem this problem is undecidable.
  A universal halting polynomial gives a computable family of ideals
  in one fixed polynomial ring, with uniform bounds on colength,
  generator count, and generator degree, for which the containment
  of a trinomial encodes the halting problem.
\end{abstract}

\maketitle

\section{Introduction} \label{sec:intro}

Every polynomial $f$ in the indeterminates $x = (x_1, \dots, x_m)$ has
a unique representation
\[
  f = \sum_{a \in \supp(f)} c_a x^a, \qquad c_a \neq 0,
\]
where the \emph{support} $\supp(f) \subseteq \NN^m$ is the set of exponent
vectors appearing with nonzero coefficient.  The summands $c_a x^a$
are the \emph{terms} of~$f$, and the \emph{number of terms} of $f$
is $|\supp(f)|$.
Any polynomial here is understood in this reduced form, that is, the same 
monomial never appears in two or more terms.
Let $K$ be a field.  For an ideal
$I \subseteq K[x_1, \dots, x_m]$ in the polynomial ring over~$K$, let
\[
  \tinv(I) \defas \min \Set{ \card{\supp(f)} : 0 \neq f \in I }, \qquad
  \tinv(0) \defas \infty.
\]
The quantity $\tinv(I)$ is the length of a shortest nonzero
polynomial in~$I$. The shortest polynomials are monomials, binomials
and trinomials (having exactly one, two and three terms, respectively).
For a fixed integer $k \in \NN$, we consider the computational decision
problem in which one receives a finite list of generators of $I$ as the
input and must decide whether $\tinv(I) \le k$ holds.

Properties of the support of polynomials appear much more
computationally delicate than the more common degree-based invariants
such as the Betti table.  Deciding $\tinv(I) \le 1$ is easy, since $I$
contains a monomial if and only if
$I : (x_1 \cdots x_m)^\infty = (1)$.  Already for two terms,
number-theoretic aspects become visible, but the solution is still
effective.  The \emph{binomial part} of~$I$, the vector space spanned
by all binomials in~$I$, is again an ideal and computable over number
fields~\cite{JensenKahleKatthan2017,KreuzerWalsh2023}, so
$\tinv(I) \leq 2$ is decidable.

The main result Theorem~\ref{thm:main} implies that the problem
becomes undecidable for trinomials. By an \emph{undecidable problem} we
throughout mean a Turing-undecidable problem, that is, a class of YES/NO
questions for which we seek a single Turing machine that correctly answers
all questions in the class (and in particular always terminates).
In his survey~\cite{Poonen2014}, Poonen argues that
undecidable problems arise naturally in mathematics.
Hilbert's~10th problem is a prominent example. It asks for an algorithm
that given a diophantine equation, correctly answers if the equation has
an integral solution. The MRDP~theorem, named after Davis, Putnam, Robinson,
and Matiyasevich~\cite{DavisPutnamRobinson1961,Matiyasevich1970},
shows that such an algorithm cannot exist.

Our problem class for this paper consists of all ideals
$I \subseteq \QQ[x_1, \dotsc, x_m]$ for arbitrary $m$.
Thanks to Hilbert's Basis Theorem, every such ideal is finitely generated
and can be encoded into a finite string. Our undecidability result follows
by a direct reduction from Hilbert's~10th problem.
For an integral polynomial $P \in \ZZ[y_1, \dots, y_n]$ write
$\mathcal Z_\ZZ(P) \defas \Set{ y \in \ZZ^n : P(y) = 0 }$ for its set
of integral zeros. The construction of Theorem~\ref{thm:main} computes
from~$P$ a finite list of generators of a proper zero-dimensional
ideal $I_P \subseteq \QQ[S, T, D_1, \dots, D_N]$ such that
\[
  \tinv(I_P) =
  \begin{cases}
    3, & \mathcal Z_\ZZ(P) \neq \emptyset, \\
    4, & \mathcal Z_\ZZ(P) = \emptyset.
  \end{cases}
\]
By the MRDP theorem it is undecidable if
$\mathcal Z_\ZZ(P) = \emptyset$ and hence deciding $\tinv(I_P) \leq 3$
is impossible too.  The theorem says slightly more.  Even separating the
two consecutive values $\tinv = 3$ and $\tinv = 4$ is impossible on a
Turing machine.  Moreover, $I_P$ contains neither a monomial nor a binomial,
so ``at most three'' and ``exactly three'' agree on the instances produced
by the reduction.
More undecidability consequences are collected
in~Corollary~\ref{cor:undecidable} and in Section~\ref{sec:universal}
we describe consequences of the existence of universal diophantine
equations in this context.

To delineate the complexity, for a \emph{fixed} finite set $E \subseteq \NN^m$
of exponent vectors it is decidable
whether $I$ contains a nonzero polynomial with support contained in~$E$.
This is what Gr\"obner bases do.  From~$I$ one computes the residues 
of the monomials $x^a$ modulo $I$ for all $a \in E$ and tests them for
linear dependence.
The finite exponent sets~$E$ can be enumerated effectively.
Enumerating rational coefficient vectors as well,
a program can even print all trinomials of~$I$ in some order,
but no finite amount of waiting can certify that $I$ contains no
trinomial.  Our results also show that no degree bound for the
shortest polynomial in an ideal~$I$ is computable from a
representation of~$I$, since a finite search up to such a bound
would decide the problem.

\subsection*{Historical context and related work}
\enlargethispage{3ex}
The number of terms is a curious complexity measure for polynomials.
It appears naturally when considering sparse representations for
polynomials, but it also has some counter-intuitive properties, for
example that because of cancellation, the number of terms of a
product is not predictable or at least depends on properties of the
coefficients, as is visible in the intricate theory of
decompositions of binomial ideals~\cite{KahleMiller2014}.

\subsubsection*{Sparse polynomials}

In sparse or lacunary computer algebra, a polynomial is represented by
its nonzero coefficients and their exponent vectors.
Thus, the number of terms is directly relevant for algorithmic
complexity. Plaisted used this representation for early hardness results
on polynomial and divisibility problems.
Computing the degree of the least common multiple of a finite set of
sparse univariate integer polynomials is NP-hard~\cite{Plaisted1977},
and so is deciding whether two sparse univariate polynomials have
a nontrivial common factor~\cite[Theorem~3.3]{Plaisted1984}.
Fewnomial theory uses support size as a degree-independent complexity measure.
Its aim is to bound the number of nondegenerate positive real solutions of
systems whose supports are prescribed~\cite{Khovanskii1980,BihanSottile2007}.
Both of these instances use sparsity as a complexity measure but they do
not deal with the question of finding short polynomial relations.

\subsubsection*{Unit equations and algebraic dynamics}

Finding a $k$-nomial in an ideal $I \subseteq K[x_1, \dots, x_m]$
is the same as finding a Laurent $k$-nomial in its extension~$IL$
inside the Laurent ring $L = K[x_1^\pm, \dots, x_m^\pm]$. From the
Gr\"obner basis point of view, one would like to fix the exponent
vectors of a putative $k$-nomial and use normal forms and linear
algebra to see if there are coefficients from $K$ which produce an
ideal element.
There is a dual point of view in which the coefficients are fixed
and fitting Laurent monomials are sought. This is the setting of
\emph{unit equations}:
\begin{equation}
  \label{eq:unit}
  a_1 u_1 + a_2 u_2 + \dots + a_k u_k = 0.
\end{equation}
This equation is interpreted in a $K$-algebra~$A$ with fixed
coefficients $a_1, \dots, a_k \in K$ and unknowns $u_1, \dots, u_k$
from a finitely generated multiplicative subgroup
$\Gamma \subseteq A^\times$.
For short polynomials in $IL$ one picks $A = L/IL$ and $\Gamma$
the image of the group of Laurent monomials.
The origins of this topic lie in algebraic number theory where $K$
is a number field, $S$ is a finite set of places of $K$ and $\Gamma$
is the unit group of the ring of $S$-integers in~$K$; see~\cite{EvertseGyory}.

The theory of unit equations furnishes finiteness results and in some
cases effective bounds on the exponent vectors solving~\eqref{eq:unit}.
Notably, if $K$ has positive characteristic and $I$ is prime, the
solution set admits an effective description~\cite{DM1}. Over finite fields, the coefficients $a_i$ can
be absorbed into the finitely generated group~$\Gamma$ and hence the
short polynomial question is governed by a single instance of~\eqref{eq:unit}
in which all~$a_i = 1$. A decision procedure for the short polynomial
problem for general ideals with finite field coefficients was
therefore anticipated in~\cite{DM3}.
A recent result of Dong and Shafrir~\cite{DongShafrir} allows a
transparent proof of this (Proposition~\ref{prop:finite-fields}).

Unit equations also matter in \emph{algebraic dynamics}.
In this area it was discovered that $\ZZ^d$-actions on compact abelian
groups are (in a categorical sense) equivalent to countable modules
over the Laurent polynomial ring $\ZZ[u_1^\pm, \dotsc, u_d^\pm]$.
See~\cite{AlgebraicOrigin} for a textbook-size introduction.
Derksen and Masser write of ``the problem of finding the
`shortest' polynomial in a given ideal'' in the context of mixing,
adding that ``in zero characteristic this problem is surprisingly
difficult and probably there is in general no effective
algorithm''~\cite[p.~2628]{DM3}.  Theorem~\ref{thm:main} confirms
their suspicion.

\subsubsection*{Sparse multiples}

The direct search for the shortest polynomial is interesting already
in the univariate situation, where every ideal is principal.  Let
$I = (f) \subseteq K[x]$.  Finding a short element means finding a
sparse polynomial multiple of~$f$.  Because any sparse multiple can be
multiplied by $x$ to produce another, one usually considers only
polynomials not divisible by $x$ (or works in the Laurent polynomial~ring).
Trinomials of this kind are named \emph{standard trinomials}.  They~have
the form $ax^m - bx^n - c$, with $m > n > 0$ and $a, b, c$ all
nonzero.  Standard trinomials that differ by a nonzero scalar factor
are identified, so that counting them is meaningful.  In early work
Posner and Rumsey asked which rational polynomials $p$ divide
infinitely many standard trinomials.  Making such $p$ is easy enough:
modulo a quadratic polynomial the residues of $1$ and~$x$ span the
entire ring, so any three powers of~$x$ are linearly dependent.
Therefore, for every $r \geq 1$, every divisor of a quadratic in
$x^r$ divides infinitely many trinomials, provided its constant term
is nonzero.  Posner and
Rumsey almost proved the converse, namely that every divisor of
infinitely many standard trinomials divides a cubic in $x^r$ for
some~$r$.  They conjectured that a linear or quadratic polynomial
in~$x^r$ always suffices~\cite{PosnerRumsey1965}.
Gy{\H{o}}ry and Schinzel proved this quantitatively.  If $p$ divides
more than $(4sd)^{s^6 2^{180d} + 8s\ell}$ standard trinomials, the
conclusion already follows.  Here $d$ is the degree of the splitting
field of~$p$, $\ell$ its number of distinct roots, and $s$ a count of
associated places.  The bound is independent of the coefficient
sizes~\cite[Theorem~1]{GyorySchinzel1994}.  Schlickewei and Viola
removed the dependence on $s$ and~$d$ entirely, replacing the bound by
$2^{44000} (\deg p)^{1000}$~\cite{SchlickeweiViola1997}.  Over
\emph{any} field of characteristic zero, $p$ divides infinitely many
standard trinomials if and only if it divides a linear or quadratic
polynomial in~$x^r$ for some~$r$~\cite[Theorem~2A]{GyorySchinzel1994}.
Positive characteristic is genuinely different.  An analogue for
dividing $k$-nomials fails for $k \geq 4$~\cite{GyorySchinzel1994}.

Giesbrecht, Roche, and Tilak worked on the algorithmic problem of
finding short multiples.  For an irreducible polynomial~$f$,
\cite[Theorem~3.3]{GiesbrechtRocheTilak2012} gives an explicit search
bound on the degree of a binomial multiple.  Assembling the factors of
an arbitrary rational polynomial,
\cite[Theorem~3.6]{GiesbrechtRocheTilak2012} then yields a
deterministic algorithm returning the least-degree binomial multiple,
or a proof that none exists. For $k$-nomial multiples with $k \geq 3$
over~$\QQ$, their algorithm needs an a priori height bound as input
and the additional hypothesis that $f$ has no repeated cyclotomic
factors.  They call removing these restrictions desirable ``though not
necessarily possible'', and suspect the problem is NP-complete
over~$\QQ$ once $k$ is part of the
input~\cite[Section~6]{GiesbrechtRocheTilak2012}. Bilu and Luca give
effective degree and height bounds under further conditions on the roots.
Our Proposition~\ref{prop:squarefree-univariate} records a resulting
decision procedure for squarefree univariate ideals over number
fields.

\subsubsection*{Short polynomials on algebraic sets}

Instead of having an arbitrary ideal $I$ given by generators, one may
consider a class of structured subsets in affine or projective spaces
and ask for the shortest polynomial vanishing there.
For general and skew-symmetric matrices of bounded rank,
there are no shorter polynomials in their vanishing
ideals than the appropriate determinants and Pfaffians,
respectively~\cite{DraismaKahleWiersig2023}.
The (numerical) search for short polynomials vanishing on algebraic
sets is part of interpolation theory.  See for
example~\cite{HauensteinMatusevichPetersonSherman2022}.
In a geometric setting, the ideals in question are at least radical.
Our undecidability result makes essential use of nonreducedness and
does not exclude decidability of the $k$-nomial containment for
radical ideals.

\subsection*{Outline of the reduction}
Exponent vectors of Laurent monomials
$D^d = D_1^{d_1} \cdots D_N^{d_N}$ are a convenient encoding of
integer vectors $d \in \ZZ^N$.  Our reduction consists of making ideal
membership of a trinomial whose exponents store $d$ equivalent to a
diophantine condition on~$d$.  Doing this requires three things:
sufficiently many candidate trinomials indexed by integer vectors, a
mechanism that imposes polynomial equations on those vectors using
only computable ideal operations, and a translation of Hilbert's 10th
problem into equations that the mechanism can impose.  These are the
contents of Sections~\ref{sec:base}, \ref{sec:quadrics},
and~\ref{sec:encoding}, respectively.

In Section~\ref{sec:base} we construct the \emph{base ideal} $J_0$ in
the Laurent ring
$L_N = \QQ[S^{\pm1}, T^{\pm1}, D_1^{\pm1}, \dots, D_N^{\pm1}]$.  By
Proposition~\ref{prop:shape} it contains no monomial and no binomial,
and up to multiplication by nonzero scalars and Laurent monomials its
trinomials form exactly two families: the \emph{affine} trinomials
$\tau_d = 1 - SD^d - TD^{-d}$, one for each $d \in \ZZ^N$, and the
\emph{infinite} trinomials $\theta_e^{p,q}$, indexed by a primitive
vector $e \in \ZZ^N$ and distinct nonzero integers $p, q$. The affine
family is the supply we want.  It realizes every integer vector as a
trinomial. The infinite family is unwanted, but cannot be avoided
(Remark~\ref{rem:infinite-forced}).  The ideal $J_0$ also contains the
quadrinomial $\Omega = (S+T-1)(S-T)$.

Section~\ref{sec:quadrics} provides the mechanism. To keep the
construction computable, all conditions are imposed
ideal-theoretically, by intersecting $J_0$ with kernels of ring
homomorphisms.  Any such intersection selects from the trinomials in
$J_{0}$, but the conditions we need concern the exponents of a
trinomial.  This problem is solved with an exponential map.

Section~\ref{sec:encoding} provides the translation. Hilbert's 10th
problem starts from a single polynomial $P \in \ZZ[y_1, \dots, y_n]$
of arbitrary degree which we write as a straight-line program by
making each intermediate step in the evaluation a new variable.  This
rewrites $P(y) = 0$ as a system of equations of degree at most two
with the same integral solvability (Lemma~\ref{lemma:quadratic}).
These equations must then be homogenized to become quadratic forms
required for the mechanisms in Section~\ref{sec:quadrics}.  The
homogenization in this step creates solutions at infinity which we
need to control with additional quadrics, so-called \emph{guards},
one of which arises from the classical Pell-equation.
In the end the guarded system has no nonzero rational solutions at
infinity, and an integral affine solution exactly when $P$ has an
integral zero.  Intersecting $J_0$ with the appropriate kernels,
one for each equation of the guarded system, kills
every infinite trinomial and retains the affine trinomial $\tau_d$
exactly when $d$ encodes an integral zero of~$P$.  If there is no
integral zero, no trinomial survives at all.  The quadrinomial
$\Omega$ lies in every one of the kernels and survives throughout,
which is what caps the answer at four.  Clearing of denominator type
arguments bring everything to the polynomial ring.  The desired ideal
$I_P$ is the kernel of one explicit
homomorphism to a finite-dimensional algebra, and linear algebra over
$\QQ$ computes a generating set of polynomials of total degree at
most five.  Corollary~\ref{cor:undecidable} collects
different formulations of the undecidability that follows from the
MRDP theorem.  In Section~\ref{sec:universal} we briefly explore the
consequences of the existence of universal diophantine equations.
Section~\ref{sec:effective} closes with finite procedures for
squarefree univariate ideals
and for arbitrary ideals over finite fields.

\subsection*{AI in this research}
Coding agents and large language models, the melange called ``AI'' in
2026, made substantial contributions to the results presented here.
The three authors
have discussed the possibility of a reduction from Hilbert's 10th
problem on multiple occasions. In the end it was GPT 5.6~Sol (OpenAI)
which came up with the first valid proof in July~2026.
The present paper is the result of a process in which the authors have
examined, strengthened, explained and rewritten the proof over many
iterations.  This has disentangled the argument and removed
redundancies as well as unnecessary dependencies.
Despite publishing papers, giving talks and getting funding granted
for the short polynomials problem, some important connections of this
problem to other areas of mathematics and computer science remained
hidden until AI pointed them out to us.  We have attempted to give
them appropriate coverage above.

Finally, a Lean formalization of our proof was almost entirely
generated by coding agents.  Next to GPT 5.6 Sol, also Claude Fable~5
(Anthropic) was used.  The authors have carefully vetted the
statements and their formalizations.  The formalization is available
on Microsoft GitHub at
\begin{center}
  \url{https://github.com/tom111/trinomials-undecidable-lean}.
\end{center}

\section{The base ideal} \label{sec:base}

We work in a Laurent polynomial ring
$L_N = \QQ[S^{\pm1}, T^{\pm1}, D_1^{\pm1}, \dots, D_N^{\pm1}]$ where
monomials are units.
Our construction takes place in the following explicitly given
\emph{base ideal} in~$L_N$:
\begin{equation}
  \label{eq:J0-gens}
  \begin{aligned}
    J_0 ={}
      &\bigl(S+T-1,\, (S-T)^5\bigr) + {} \\
      &\bigl((S-T)(D_i-1) : 1\leq i\leq N\bigr) + {} \\
      &\bigl((D_i-1)(D_j-1) : 1\leq i\leq j\leq N\bigr).
  \end{aligned}
\end{equation}

This ideal is best understood in the coordinates
\[
  A \defas S+T-1, \quad B \defas S-T, \quad \text{and} \quad C_i \defas D_i - 1.
\]
In these coordinates the Laurent ring $L_N$ is the localization of
$\QQ[A, B, C_1, \dots, C_N]$ at $S = (1+A+B)/2$, $T = (1+A-B)/2$, and
$D_i = 1+C_i$.  There
$J_0 = (A, B^5, B C_i, C_i C_j : 1 \leq i \leq j \leq N)$
is a monomial ideal which is primary to the maximal ideal
$(A, B, C_1, \dots, C_N)$.  Each inverted element has
nonzero constant term and therefore lies outside this maximal ideal,
so the localization preserves the quotient, the primaryness, and the
radical.  In $L_N$ the radical
\[
  \sqrt{J_0} = \left(2S-1,\, 2T-1,\, D_1-1,\, \ldots,\, D_N-1\right)
\]
is maximal. We record the following additional facts.

\begin{lemma} \label{lemma:A0-dim} The ideal $J_0$ has Krull
  dimension~zero.  A $\QQ$-basis of $A_0 \defas L_N / J_0$ consists of
  the images of the monomials
  $1, B, B^2, B^3, B^4, C_1, \dots, C_N$, and thus $\dim_{\QQ}(A_0) = N + 5$.
\end{lemma}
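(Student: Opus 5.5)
The plan is to reduce everything to the polynomial ring $R \defas \QQ[A, B, C_1, \dots, C_N]$ through the linear change of coordinates described above, and then to localize. The substitution $S = (1+A+B)/2$, $T = (1+A-B)/2$, $D_i = 1+C_i$ is an invertible affine change of variables over $\QQ$, so $\QQ[S,T,D_1,\dots,D_N] = R$. Consequently $L_N$ is the localization of $R$ at the multiplicative set $U$ generated by $S, T, D_1, \dots, D_N$. In these coordinates the generators of $J_0$ become the monomials $A$, $B^5$, $BC_i$ and $C_iC_j$. Let $\mideal = (A, B, C_1, \dots, C_N)$ and write $J_0^R \defas (A, B^5, BC_i, C_iC_j) \subseteq R$. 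Then $J_0 = U^{-1} J_0^R$.

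First I would compute $R/J_0^R$ directly. Since $J_0^R$ is a monomial ideal, the standard monomials, meaning those not divisible by any generator, form a $\QQ$-basis of $R/J_0^R$. A monomial avoiding $A$, $B^5$, $BC_i$ and $C_iC_j$ has no $A$. It contains at most one $C$-variable, and then only to the first power. If it contains some $C_i$, it contains no $B$; otherwise its $B$-degree is at most $4$. The standard monomials are therefore exactly $1, B, B^2, B^3, B^4, C_1, \dots, C_N$, so $\dim_\QQ R/J_0^R = N+5$. In particular $\mideal^5 \subseteq J_0^R \subseteq \mideal$, so $R/J_0^R$ is a local Artinian ring with maximal ideal $\mideal/J_0^R$.

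The key step, and the only one needing care, is to show that localization at $U$ does not change the quotient. Each element of $U$ is a product of elements with nonzero constant term in the $A,B,C$-coordinates: $S$ and $T$ have constant term $1/2$, and $D_i$ has constant term $1$. Hence each $u \in U$ lies outside $\mideal$. In the local Artinian ring $R/J_0^R$ its image is therefore a unit, because nonunits lie in the unique maximal ideal, or more concretely because $u = c(1 - n)$ with $c \in \QQ^\times$ and $n$ nilpotent modulo $J_0^R$. It follows that $U^{-1}(R/J_0^R) \cong R/J_0^R$, with the natural map an isomorphism. So $A_0 = L_N/J_0 \cong R/J_0^R$ has the claimed basis of images of $1, B, \dots, B^4, C_1, \dots, C_N$, and also $J_0 \cap R = J_0^R$.

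Finally, the Krull dimension is zero because $A_0$ is a finite-dimensional $\QQ$-algebra, hence Artinian. Equivalently, $J_0$ is contained only in the maximal ideal $U^{-1}\mideal$, which is proper because $\mideal \cap U = \emptyset$. I expect no real obstacle here; the unit argument in the localization step is the point to write carefully.
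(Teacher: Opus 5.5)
Your proof is correct and follows the paper's own justification, given in the paragraph preceding the lemma. There, $J_0$ becomes the monomial ideal $(A, B^5, BC_i, C_iC_j)$ in the coordinates $A, B, C_i$, primary to $(A,B,C_1,\dots,C_N)$, and the inverted elements $S, T, D_i$ have nonzero constant term, so they are units modulo this ideal and localization leaves the quotient unchanged; your standard-monomial count and explicit unit argument just fill in details the paper leaves implicit.
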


\begin{lemma} \label{lemma:J0-nf}
Modulo $J_0$ we have
\begin{equation*}
  S = \frac{1 + B}{2}, \quad
  T = \frac{1 - B}{2}, \quad \text{and} \quad
  D_i = 1 + C_i.
\end{equation*}
In particular, the images of $1+B$ and $1-B$ in $A_0$ are~units.
For any $a, b \in \ZZ$ and $d \in \ZZ^N$:
\[
  S^a T^b D^d \equiv 2^{-a-b}\left( (1+B)^a (1-B)^b + \sum_{i=1}^N d_i C_i \right) \pmod{J_0}.
\]
\end{lemma}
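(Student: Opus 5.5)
The three congruences come directly from the coordinate change. Since $A = S+T-1 \in J_0$ and $B = S-T$, we have $S+T \equiv 1$ and $S-T = B$ modulo $J_0$. Adding and subtracting gives $S \equiv (1+B)/2$ and $T \equiv (1-B)/2$. The relation $D_i = 1 + C_i$ is the definition of $C_i$.

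Units first. The element $(1+B)(1-B+B^2-B^3+B^4) = 1+B^5 \equiv 1$ since $B^5 \in J_0$. So $1+B$ is a unit in $A_0$, and likewise $1-B$. Alternatively, $2S$ and $2T$ are already units in $L_N$, and their images are $1+B$ and $1-B$.

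For the displayed formula, I would use that $A_0$ is a commutative ring and that the map $L_N \to A_0$ is a ring homomorphism. This gives
\[
S^aT^bD^d \equiv 2^{-a}(1+B)^a\, 2^{-b}(1-B)^b \prod_i (1+C_i)^{d_i}.
\]
Negative exponents are interpreted via the units just established.

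\begin{enumerate}
\item \textbf{Product over $i$.} We have $C_iC_j \in J_0$ for all $i \le j$, including $C_i^2$. Hence $(1+C_i)^{-1} \equiv 1 - C_i$. It follows that $(1+C_i)^{k} \equiv 1 + kC_i$ for every $k \in \ZZ$: for $k \ge 0$ use the binomial theorem, and for $k<0$ use $(1-C_i)^{|k|} \equiv 1 - |k| C_i$. Multiplying over $i$ and killing the products $C_iC_j$ gives $\prod_i(1+C_i)^{d_i} \equiv 1 + \sum_i d_iC_i$.
\item \textbf{Multiplying the two factors.} Set $u = (1+B)^a(1-B)^b$, a polynomial in $B$ modulo $J_0$. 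Then
\[
u\bigl(1+\sum_i d_iC_i\bigr) = u + \sum_i d_i\, u C_i.
\]
Since $BC_i \in J_0$, only the constant term of $u$ survives in $uC_i$. That constant term is $1^a\cdot 1^b = 1$; for negative exponents, the inverse of a unit $1\pm B$ also has constant term $1$. So $uC_i \equiv C_i$, and the product becomes $u + \sum_i d_iC_i$.
\end{enumerate}

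Restoring the factor $2^{-a-b}$ yields the claimed formula. The only point needing care is the bookkeeping for negative exponents: one must check that $(1+B)^{-1}$ and $(1+C_i)^{-1}$ behave as asserted. This follows from the explicit truncated inverses above, which rely on $B^5 \in J_0$ and $C_i^2 \in J_0$.
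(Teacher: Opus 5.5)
Your proof is correct and follows essentially the same route as the paper. You substitute the normal forms of $S$, $T$, $D_i$, linearize $\prod_i (1+C_i)^{d_i}$ to $1+\sum_i d_i C_i$ using $C_iC_j \equiv 0$, and then distribute using $BC_i \equiv 0$. You are more explicit than the paper about negative exponents, via the truncated inverses of $1\pm B$ and $1+C_i$ and the fact that the constant term of $(1+B)^a(1-B)^b$ is $1$; the paper covers this with a brief appeal to the binomial theorem for integer exponents.
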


\begin{proof}
The expressions for $S$, $T$, and $D_i$ follow immediately from the
generators of~$J_0$. Then,
\begin{equation*}
  S^a T^b D^d \equiv 2^{-a-b} (1+B)^a (1-B)^b \prod_{i=1}^N (1+C_i)^{d_i} \pmod{J_0}.
\end{equation*}
Since each $1 + C_i$ is a unit modulo $J_0$ and $C_i C_j \equiv 0
\pmod{J_0}$, the binomial theorem for integer exponents gives
$(1+C_i)^{d_i} \equiv 1 + d_i C_i \pmod{J_0}$, so the product
$\prod_{i=1}^N (1+C_i)^{d_i}$ simplifies to $1 + \sum_{i=1}^N d_i C_i
\pmod{J_0}$. With $B C_i \equiv 0 \pmod{J_0}$ we get
\begin{align*}
  S^a T^b D^d &\equiv 2^{-a-b} (1+B)^a (1-B)^b \left(1 + \sum_{i=1}^N d_i C_i\right) \pmod{J_0} \\
  &\equiv 2^{-a-b} (1+B)^a (1-B)^b + 2^{-a-b} \sum_{i=1}^N d_i C_i \pmod{J_0}. \qedhere
\end{align*}
\end{proof}

\begin{definition}
We distinguish two special types of Laurent trinomials in $L_N$:
\begin{itemize}
  \item \emph{Affine type}: $\tau_d = 1 - S D^d - T D^{-d}$ with $d \in \ZZ^N$.
  \item \emph{Infinite type}: $\theta^{p,q}_e = (p-q) + q D^{pe} - p D^{qe}$ for
    a primitive vector $e \in \ZZ^N$ and distinct nonzero integers
    $p$ and~$q$.
\end{itemize}
For $d \in \ZZ^N$ write $|d| \defas (|d_1|, \dots, |d_N|) \in \NN^N$. The
\emph{clearing} of $\tau_d$ is the ordinary polynomial
\[
  \tilde\tau_d \defas D^{|d|}\,\tau_d = D^{|d|} - S D^{|d|+d} - T D^{|d|-d}
  \in \QQ[S, T, D_1, \dots, D_N].
\]
\end{definition}

\begin{proposition} \label{prop:shape}
The short polynomials in $J_0$ are as follows:
\begin{enumerate}[label=\textup{(\roman*)}]
\item \label{prop:shape-1-2} There is no monomial or binomial in the
  base ideal.
\item \label{prop:shape-3} Up to units of~$L_N$, its trinomials are
  precisely the affine and infinite trinomials.
\item \label{prop:shape-4} The quadrinomial $\Omega = (S+T-1) (S-T) =
  S^2 - T^2 - S + T$ lies in~$J_0$.
\end{enumerate}
\end{proposition}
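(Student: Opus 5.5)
The approach is to reduce membership in $J_0$ to a finite system of linear conditions, using Lemma~\ref{lemma:A0-dim} and Lemma~\ref{lemma:J0-nf}. Part~\ref{prop:shape-4} is immediate, since $\Omega = AB$ and $A \in J_0$.

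For the rest, write an arbitrary Laurent polynomial as $f = \sum_k c_k S^{a_k} T^{b_k} D^{d_k}$ with pairwise distinct exponent triples $(a_k,b_k,d_k)$. Put $w_k \defas c_k 2^{-a_k-b_k}$, $u_k \defas a_k - b_k$ and $v_k \defas a_k + b_k$; note that $(a,b) \mapsto (u,v)$ is injective. Modulo $B^5$ we have
\[
  (1+B)^a(1-B)^b = \exp\bigl(u\,\ell_1(B) - v\,\ell_2(B)\bigr), \qquad \ell_1 = B + \tfrac{B^3}{3}, \quad \ell_2 = \tfrac{B^2}{2} + \tfrac{B^4}{4}.
\]
By the normal form of Lemma~\ref{lemma:J0-nf} and the basis of Lemma~\ref{lemma:A0-dim}, $f \in J_0$ holds if and only if two conditions hold. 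The first is $\sum_k w_k d_k = 0$ in $\QQ^N$, from the $C_i$-coordinates. The second is $\sum_k w_k \exp(u_k\ell_1 - v_k\ell_2) \equiv 0 \pmod{B^5}$, which amounts to five polynomial moment conditions. The first three are
\[
  \sum_k w_k = 0, \qquad \sum_k w_k u_k = 0, \qquad \sum_k w_k (u_k^2 - v_k) = 0,
\]
and the $B^3$ and $B^4$ coefficients give two more, of degree $3$ and $4$ in $(u,v)$. Multiplying $f$ by the unit $S^{a}T^{b}D^{d}$ translates every $(u_k,v_k,d_k)$ by the same vector and rescales the $w_k$ by a common factor. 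So after normalizing we may assume that one term has $(u,v,d) = (0,0,0)$ and weight $1$.

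For a monomial, $\sum_k w_k = 0$ forces $w = 0$. For a binomial, we get $w_2 = -w_1$. The $B^1$ and $B^2$ conditions then give $u_1 = u_2$ and $v_1 = v_2$. The $C$-condition gives $w_1(d_1 - d_2) = 0$, hence $d_1 = d_2$, so the two terms are the same monomial, a contradiction. This proves~\ref{prop:shape-1-2}.

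For trinomials with weights $w_1, w_2, w_3$ summing to zero, I split into two cases.

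\textbf{Case~1: all three $(u_k,v_k)$ coincide.} After translation they are all $0$, so the $B$-conditions reduce to $\sum_k w_k = 0$. The $C$-condition then says that the three points $d_k$ carry an affine dependence with all weights nonzero. Hence $d_1 = 0$, $d_2$ and $d_3$ lie on a line through the origin, say $d_2 = qe$ and $d_3 = pe$ for a primitive vector $e$. Distinctness of the terms forces $p$ and $q$ to be distinct and nonzero. Solving for the weights then gives exactly $\theta_e^{p,q}$ up to a scalar.

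\textbf{Case~2: the pairs $(u_k,v_k)$ are not all equal.} Here I use $\sum_k w_k = 0$ and $\sum_k w_k u_k = 0$ to express the weights through differences of the $u_k$. I then substitute into the three higher moment conditions and solve over the integers. The expected outcome, up to translation and permutation, is
\[
  \{(u_k,v_k)\} = \{(0,0),\,(1,1),\,(-1,1)\}, \qquad w = \bigl(1, -\tfrac12, -\tfrac12\bigr).
\]
This corresponds to the pattern $1, S, T$. The subcase where exactly two of the pairs coincide must be shown to be impossible. With that weight vector, the $C$-condition reads $-\tfrac12 (d_2 + d_3) = 0$, so $d_3 = -d_2$, which is exactly $\tau_d$. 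Conversely, Lemma~\ref{lemma:J0-nf} directly verifies that every $\tau_d$ and every $\theta_e^{p,q}$ lies in $J_0$, which gives~\ref{prop:shape-3}.

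The main obstacle is the Diophantine case analysis in Case~2. One has to show that the degree-$3$ and degree-$4$ moment equations, together with integrality of the $(u_k,v_k)$, pin the configuration down to the single pattern above. This includes excluding the degenerate configurations, for instance all $u_k$ equal but the $v_k$ not all equal. I would handle it by first treating the case of all $u_k$ equal, where the conditions become moment conditions in $v$ alone and force the $v_k$ to coincide. In the generic case I would eliminate the weights and reduce to a small polynomial system in the two $u$-differences and two $v$-differences.
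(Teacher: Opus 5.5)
Your reduction is sound. The identity $(1+B)^a(1-B)^b \equiv \exp(u\ell_1 - v\ell_2) \pmod{B^5}$ reproduces exactly the paper's conditions for $k=0,\dots,4$, and your treatment of~(iii), of~(i), and of Case~1 matches the paper's.

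The gap is Case~2, which is the real content of~(ii). There you only announce the configuration $\{(0,0),(1,1),(-1,1)\}$ as the ``expected outcome''. You do not show that the cubic and quartic conditions, together with integrality, force it. This step cannot be settled by an argument that ignores integrality. The five conditions admit a one-parameter family of real, even rational, solutions with nonzero weights and distinct pairs. So a proof must use that the exponents are integers, through some mechanism that turns integrality into finiteness. The paper does this by computer algebra. Saturating by $\lambda_1\lambda_2$, decomposing, and eliminating the weights yields, on the nontrivial component, the positive definite relations $r_1^2-r_1r_2+r_2^2=3$ and $s_1^2-s_1s_2+s_2^2=1$. These confine the integer solutions to a small box, which is then enumerated. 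The subcases you flag (exactly two pairs equal, all $u_k$ equal) are also left unproved, though they are easy.

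Your plan does close the gap, and by hand.
\begin{itemize}
\item If exactly two pairs coincide, merge their terms. This leaves two distinct pairs with weights $\pm w$, where $w\neq 0$ because otherwise $\sum_k w_k=0$ kills the third weight. Your binomial computation then gives a contradiction.
\item If two $u_k$ agree, translate one of those pairs to $(0,0)$. Then $\sum_k w_k u_k=0$ forces the third $u_k$ to vanish. The $B^2$ and $B^4$ conditions now say the weights annihilate $1, v, v^2$, so by Vandermonde two pairs coincide.
\item Otherwise put $(u_1,v_1)=(0,0)$, $w_1=1$, $x=u_2$, $y=u_3$ with $x,y,x-y\neq 0$. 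Then $w_2=y/(x-y)$ and $w_3=-x/(x-y)$. The $B^3$ condition gives $3(v_3-v_2)=y^2-x^2$. The $B^2$ condition then gives $v_2=x(x-2y)/3$ and $v_3=y(y-2x)/3$. The $B^4$ condition becomes
\[
  \tfrac23\, xy(x-y)\bigl(x^2-xy+y^2-3\bigr)=0 .
\]
Since $4(x^2-xy+y^2)=(2x-y)^2+3y^2$, the equation $x^2-xy+y^2=3$ has exactly six integer solutions, namely $\pm(2,1)$, $\pm(1,2)$, $\pm(1,-1)$. Each gives a translate of $\{(0,0),(1,1),(-1,1)\}$.
\end{itemize}
Written out, this is a computer-free replacement for the paper's Macaulay2 step. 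As it stands, however, your proposal does not prove~(ii).
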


\begin{proof}
  Claim~\ref{prop:shape-4} is obvious. For \ref{prop:shape-1-2}, there
  cannot be monomials in $J_{0}$ because these are units and $J_{0}$
  is proper. Up~to units, each binomial takes the form
  $f = 1 - c S^a T^b D^d$. The condition $f \in J_0$ is equivalent to
  the vanishing of each coefficient of the residue $f + J_0 \in A_0$
  in the basis in Lemma~\ref{lemma:A0-dim}.  Here and below we expand
  $(1\pm B)^a \equiv \sum_{k=0}^{4} \binom{a}{k} (\pm B)^k \pmod{J_0}$
  for any $a \in \ZZ$ by the binomial theorem, since $1 \pm B$ is a
  unit and $B^5 \equiv 0$ modulo $J_0$.  As always,
  $\binom{a}{k} = \frac{a(a-1)\cdots(a-k+1)}{k!}$ also for
  negative~$a$.
  Using Lemma~\ref{lemma:J0-nf} and comparing coefficients of the $1$,
  $C_i$, $B$, and $B^2$ gives
  \[
    c = 2^{a+b},\quad d = 0,\quad a - b = 0,\quad \text{and}\quad (a-b)^2 - a - b = 0.
  \]
  Thus $a = b = 0$ and $c = 1$, so $f = 0$ is not a true binomial.

  For claim~\ref{prop:shape-3} first use Lemma~\ref{lemma:J0-nf}
  and direct calculation to see that each $\tau_d$ and
  $\theta_e^{p,q}$ lies in~$J_0$.
  For the converse, multiply by suitable units to write a trinomial in
  $J_0$ as follows:
  \[
    f = 1 - c_1 S^{a_1} T^{b_1} D^{d_1} - c_2 S^{a_2} T^{b_2} D^{d_2}.
  \]
  For $i \in \{1,2\}$ let $\lambda_i = c_i 2^{-a_i-b_i}$. If $f$ is an
  honest trinomial, $\lambda_{1}$ and $\lambda_{2}$ are nonzero.
  Rewriting and comparing coefficients with Lemmas~\ref{lemma:A0-dim}
  and~\ref{lemma:J0-nf} yields
\begin{align}
  \label{eq:shape-3-1}
  \lambda_1 d_1 + \lambda_2 d_2 &= 0, \quad\text{and}\quad \\
  \label{eq:shape-3-2}
  \lambda_1 (1+B)^{a_1} (1-B)^{b_1} + \lambda_2 (1+B)^{a_2} (1-B)^{b_2} &\equiv 1 \pmod{B^5}.
\end{align}
The relation~\eqref{eq:shape-3-1} shows that $d_1$ and $d_2$ are linearly
dependent.
For $k \in \{0, \dots, 4\}$ the $B^k$-coefficient of $(1+B)^a (1-B)^b$ is
\[
  \sum_{\ell=0}^k (-1)^\ell \binom{a}{k-\ell}\binom{b}{\ell}.
\]
Then \eqref{eq:shape-3-2} imposes one constraint per degree.
Let $r_i = a_i - b_i$ and $s_i = a_i + b_i$:
\begin{align*}
  \tag{$k=0$} \label{eq:k=0} \lambda_1 + \lambda_2 &= 1, \\
  \tag{$k=1$} \label{eq:k=1} \lambda_1 r_1 &= -\lambda_2 r_2, \\
  \tag{$k=2$} \label{eq:k=2} \lambda_1 (r_1^2 - s_1) &= -\lambda_2 (r_2^2 - s_2), \\
  \tag{$k=3$} \label{eq:k=3} \lambda_1 r_1 (r_1^2 - 3s_1 + 2) &= -\lambda_2 r_2 (r_2^2 - 3s_2 + 2), \\
  \tag{$k=4$} \label{eq:k=4} \lambda_1 (r_1^4 - 6 r_1^2 s_1 + 8 r_1^2 + 3 s_1^2 - 6 s_1) &= -\lambda_2 (r_2^4 - 6 r_2^2 s_2 + 8 r_2^2 + 3 s_2^2 - 6 s_2).
\end{align*}
This system of equations can be solved with computer algebra such as \texttt{Macaulay2}~\cite{M2}:
\begin{minted}{macaulay2}
R = QQ[l1,l2,a1,a2,b1,b2, r1,r2,s1,s2];
-- Express the B^i-coefficient system in a1,b1,a2,b2.
coeff = (k,a,b) -> sum toList apply(0 .. k, l -> (-1)^l * binomial(a, k-l) * binomial(b, l));
rel = k -> (rhs := if k == 0 then 1 else 0; coeff(k,a1,b1)*l1 + coeff(k,a2,b2)*l2 - rhs);
I = saturate(ideal toList apply(0 .. 4, rel), l1*l2);
-- Coordinate change to r1,s1,r2,s2.
J = eliminate(I + ideal(-r1 + a1-b1, -r2 + a2-b2, -s1 + a1+b1, -s2 + a2+b2), {a1,a2,b1,b2});
(P1, P2) = toSequence decompose J
\end{minted}
The equations generate an ideal and the conditions $\lambda_1, \lambda_2 \neq 0$
require a saturation. The resulting ideal has two associated primes which
give rise to two cases.

\textbf{Case 1}: The first prime $P_1$ has simply $r_1 = s_1 = r_2 = s_2 = 0$
and hence $a_1 = b_1 = a_2 = b_2 = 0$. Thus $f$ takes the form
\[
  1 - c_1 D^{d_1} - c_2 D^{d_2}.
\]
If $f$ is an honest trinomial then $c_1, c_2 \neq 0$ and $d_1$ and $d_2$
are nonzero, distinct and collinear by~\eqref{eq:shape-3-1}. Since
every nonzero integer vector is an integer multiple of a primitive
one, we may write $d_1 = p e$ with $p \in \ZZ$ nonzero and
$e \in \ZZ^N$ primitive. Then $d_2 = q e$ for $q = \frac{-\lambda_1 p}{\lambda_2}
= \frac{-c_1 p}{c_2} \in \QQ$. Since $e$ is primitive, Bézout's lemma
supplies $u \in \ZZ^N$ such that $\langle u, e\rangle = 1$. It then
follows that $q = q\langle u, e\rangle = \langle u, d_2\rangle \in \ZZ$.
From \eqref{eq:shape-3-1} and \eqref{eq:k=0} we have $c_1 p + c_2 q = 0$
and $c_1 + c_2 = 1$. Together they imply $c_1 = \frac{-q}{p-q}$ and
$c_2 = \frac{p}{p-q}$. Thus,
\[
  f = 1 + \frac{q}{p-q} D^{pe} - \frac{p}{p-q} D^{qe} = \frac{1}{p-q} \theta_e^{p,q}
\]
is of infinite type (up to a unit).

\textbf{Case 2}: The other prime $P_2$ has more complicated generators.
Using \mintinline{macaulay2}{eliminate(P2, {l1,l2})} we isolate relations
which do not involve $\lambda_1, \lambda_2$. The resulting elimination
ideal is generated by six polynomials:
\begin{align*}
  P_2 \cap \QQ[r_1,s_1,r_2,s_2] &{}={} \bigl(
    s_1^2 - s_1 s_2 + s_2^2 - 1,
    r_2^2 + 2 s_1 - s_2 - 2,
    r_1 s_2 + r_2 s_1 - r_2 s_2 + r_2, \\
  &\hphantom{{}={} \bigl(\,}
    r_1 s_1 - r_1 - r_2 s_2 + r_2,
    r_1 r_2 + s_1 + s_2 - 1,
    r_1^2 - s_1 + 2 s_2 - 2
  \bigr).
\end{align*}
This ideal contains
$s_1^2 - s_1 s_2 + s_2^2 - 1 = \frac14 ((2s_1 - s_2)^2 + 3s_2^2 - 4)$
and
$r_1^2 - r_1 r_2 + r_2^2 - 3 = \frac14 ((2r_1 - r_2)^2 + 3r_2^2 -
12)$.  The sum-of-squares parts confine the possible integer solutions
to $-1 \le s_1, s_2 \le 1$ and $-2 \le r_1, r_2 \le 2$.  Explicit
enumeration gives the complete list:
\begin{center}
\begin{tabular}{c||c|c||c|c||c|c}
$(a_1-b_1,a_1+b_1) = (r_1, s_1)$ & $(-2 ,  0)$ & $(-1 , -1)$ & $(-1 ,  1)$ & $( 1 ,  1)$ & $( 1 , -1)$ & $( 2 ,  0)$ \\ \hline
$(a_2-b_2,a_2+b_2) = (r_2, s_2)$ & $(-1 , -1)$ & $(-2 ,  0)$ & $(1  ,  1)$ & $(-1 ,  1)$ & $( 2 ,  0)$ & $(1  , -1)$
\end{tabular}
\end{center}
The solutions come in three pairs which are exchanged by swapping the
indices $1$ and $2$, that is, the order of two terms
of~$f$.  We may therefore assume that $((r_1, s_1), (r_2, s_2))$ is
one of
\[
  ((1,1), (-1,1)), \qquad ((2,0), (1,-1)), \qquad ((-2,0), (-1,-1)),
\]
so that $((a_1, b_1), (a_2, b_2))$ equals
\[
  ((1,0), (0,1)), \qquad ((1,-1), (0,-1)), \qquad ((-1,1), (-1,0)),
\]
respectively.  In the first case
$f = 1 - c_1 S D^{d_1} - c_2 T D^{d_2}$.  Equations \eqref{eq:k=0} and
\eqref{eq:k=1} give $\lambda_1 = \lambda_2 = \frac12$, thus
$c_1 = c_2 = 1$, and then $d_2 = -d_1$ by~\eqref{eq:shape-3-1}.  Hence
$f = \tau_{d_1}$ is of affine type.

In the second case
$f = 1 - c_1 S T^{-1} D^{d_1} - c_2 T^{-1} D^{d_2}$.  Multiplying by
the unit $-c_2^{-1} T D^{-d_2}$ gives
\[
  f' = 1 + \tfrac{c_1}{c_2}\, S D^{d_1 - d_2}
         - \tfrac{1}{c_2}\, T D^{-d_2},
\]
an honest trinomial in $J_0$ of the form treated in the first case.
Hence $f' = \tau_{d_1 - d_2}$ and $f$ is of affine type up to a unit.
The third case is symmetric to the second under exchanging $S$
and~$T$.  Multiplying
$f = 1 - c_1 S^{-1} T D^{d_1} - c_2 S^{-1} D^{d_2}$ by the unit
$-c_2^{-1} S D^{-d_2}$ again produces an honest trinomial of the form
treated in the first case.
\end{proof}

\section{Imposing quadratic constraints} \label{sec:quadrics}

We develop the main tool of the proof.  By means of intersection of
ideals we keep from $J_{0}$ only those trinomials that satisfy
quadratic equations on the exponents $d$ of $\tau_d$ and $e$ of
$\theta_e^{p,q}$, namely
\begin{align}
  \label{eq:Q-aff} Q_0(d) + \ell_0(d) + b_0 &= 0, \\
  \label{eq:Q-inf} Q_0(e) &= 0,
\end{align}
where $Q_0\colon \QQ^N \to \QQ$ is a quadratic form,
$\ell_0\colon \QQ^N \to \QQ$ a linear form, and $b_0 \in \QQ$. These
two equations arise from a single homogenized quadratic form $Q$
constructed as follows. Let $V = \QQ^{N+1} = \QQ^N \times \QQ v_0$,
where $v_0$ is a homogenization coordinate.  Denote the standard basis
vectors of $V$ by $v_1, \dots, v_N, v_0$.  We tacitly identify an
exponent vector $d \in \QQ^N$ with $\sum_{i=1}^N d_i v_i \in V$, so
that $d + t v_0 \in V$ has coordinates $(d, t)$.  The~vector $(d, 1)$
corresponds to the point $[d:1]$ of the projective space $\PP^N_\QQ$,
while for $e\neq 0$, the exponent $(e, 0)$ is the point $[e:0]$ on the
hyperplane at~infinity.
Let $Q\colon V \to \QQ$ be the quadratic form
\[
  Q(d, t) = Q_0(d) + \ell_0(d) t + b_0 t^2.
\]
Then $\text{\eqref{eq:Q-aff}} \iff Q(d, 1) = 0$ and
$\text{\eqref{eq:Q-inf}} \iff Q(e, 0) = 0$. These equations are imposed on
the trinomial exponents by intersecting $J_0$ with the kernel of an explicit
homomorphism $\phi_Q\colon L_N \to A_Q$. We~first construct its codomain
and then the map.

\begin{definition}
For a symmetric bilinear form $B\colon V \times V \to \QQ$ let $A_B = \QQ
\times V \times \QQ$. Equip this set with coordinatewise addition and a
multiplication given by
\[
  (r,v,s) \cdot (r',v',s') \defas \bigl(rr',\; rv' + r'v,\; rs' + r's + B(v,v')\bigr).
\]
Given a quadratic form $Q\colon V \to \QQ$ let $B(v,v') = \frac12 \left(
Q(v+v') - Q(v) - Q(v')\right)$ be its associated bilinear form and set
$A_Q \defas A_B$.
\end{definition}

The following facts are well-known and we omit the proof.
\begin{lemma} \label{lemma:A_B} Identify $V$ with its factor
  $\{0\} \times V \times \{0\} \subseteq A_B$ and let
  $\zeta = (0,0,1) \in A_{B}$.  The $\QQ$-algebra
  $A_B = \QQ \oplus V \oplus \QQ \zeta$ is graded and local with
  nilradical $\nideal = V \oplus \QQ\zeta$, and $\nideal^3 = 0$.  Let
  $v_1, \dots, v_n$ be a basis of $V$ and put $b_{ij} = B(v_i, v_j)$.
  Then $A_B \cong \QQ[X_1, \dots, X_n, Z] / I$ with
\[
  I = (X_i X_j - b_{ij}Z : 1\leq i\leq j\leq n) +
      (X_i Z : 1\leq i\leq n) +
      (Z^2).
\]
\end{lemma}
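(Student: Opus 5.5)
The plan is to verify the algebra axioms for $A_B$ directly, then read off the grading, the local structure and the nilradical, and finally build the presentation by a surjection whose kernel we compare through dimension counts.

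\textbf{Axioms.} Bilinearity of the product is immediate from bilinearity of $B$. Commutativity follows from symmetry of $B$. The element $(1,0,0)$ is the identity. For associativity, expand both $((r,v,s)(r',v',s'))(r'',v'',s'')$ and $(r,v,s)((r',v',s')(r'',v'',s''))$. The first two components agree trivially. The third component is in both cases
\[
  rr's''+rr''s'+r'r''s + rB(v',v'') + r'B(v,v'') + r''B(v,v').
\]

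\textbf{Grading and local structure.} Put $\QQ$ in degree $0$, $V$ in degree $1$ and $\QQ\zeta$ in degree $2$. The formula shows that $V\cdot V\subseteq\QQ\zeta$ and $V\cdot\zeta=\zeta\cdot\zeta=0$, so the product respects degrees, with everything of degree $\ge 3$ vanishing. Hence $\nideal=V\oplus\QQ\zeta$ is an ideal with $\nideal^3=0$, and all its elements are nilpotent. If $r\neq 0$, then $(r,v,s)=r(1+x)$ with $x\in\nideal$, which is a unit with inverse $r^{-1}(1-x+x^2)$. So $\nideal$ is the unique maximal ideal and equals the nilradical, and $A_B$ is local.

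\textbf{Presentation.} Define $\pi\colon\QQ[X_1,\dots,X_n,Z]\to A_B$ by $X_i\mapsto v_i$ and $Z\mapsto\zeta$. It is surjective, since $1$, the $v_i$ and $\zeta$ span $A_B$. The product formula gives $v_iv_j=B(v_i,v_j)\zeta=b_{ij}\zeta$ and $v_i\zeta=\zeta^2=0$, so $I\subseteq\ker\pi$.

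For the reverse inclusion, compare dimensions. Modulo $I$, every monomial of degree $\ge 2$ reduces to a scalar multiple of $Z$ or to $0$. To see this:
\begin{itemize}
  \item any $X_iX_j$ becomes $b_{ij}Z$;
  \item any product of $Z$ with a variable vanishes.
\end{itemize}
Hence $1,X_1,\dots,X_n,Z$ span the quotient, so its dimension is at most $n+2=\dim_\QQ A_B$. Surjectivity then forces $\ker\pi=I$.

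The only step with any real content is the associativity calculation, and it is routine.
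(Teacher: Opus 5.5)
Your proof is correct and complete. The paper does not prove this lemma; it calls the facts well known and omits the argument. What you wrote is exactly the routine verification it skips: the associativity computation, inverting $r(1+x)$ as $r^{-1}(1-x+x^2)$ using $\nideal^3=0$, and the presentation via the surjection $X_i\mapsto v_i$, $Z\mapsto\zeta$ together with the bound $\dim_\QQ \QQ[X_1,\dots,X_n,Z]/I\le n+2=\dim_\QQ A_B$.

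Two points are left tacit, and both are harmless. First, $c\mapsto(c,0,0)$ makes $A_B$ a $\QQ$-algebra whose scalar multiplication agrees with the vector space structure, so $\pi$ is a $\QQ$-algebra map and the dimension comparison is legitimate. Second, the generators with $i\le j$ suffice, since $X_jX_i=X_iX_j$ and $b_{ji}=b_{ij}$.
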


\begin{remark}
  The construction of $A_B$ is standard.  It is a local ring whose
  maximal ideal $\nideal$ satisfies $\nideal^3 = 0$.  Such rings
  appear, for example, as \emph{radical cube
    zero}~\cite{KikumasaYoshimura2003} or \emph{short local} rings.
  In commutative algebra, modules over such rings are a test case for
  understanding infinite free
  resolutions~\cite{AvramovIyengarSega2008}.  Their multiplication is
  a symmetric bilinear map
  $\nideal/\nideal^2 \times \nideal/\nideal^2 \to \nideal^2$.
  Conversely, every symmetric bilinear map of vector spaces arises
  this way.  The
  socle of $A_B$ is $\operatorname{rad}(B) \oplus \QQ\zeta$, so $A_B$
  is Gorenstein if and only if $B$ is nondegenerate.  In that case it
  is the apolar algebra of a smooth
  quadric~\cite[Chapters~1--2]{IarrobinoKanev1999}, a Poincar\'e
  duality algebra classified by its quadratic form~\cite{Sah1974}, and
  the tangent cone in Sally's study of Gorenstein
  singularities~\cite{Sally1980}.  Closest to our use is the work of
  Fels and Kaup on affinely homogeneous surfaces~\cite{FelsKaup2012},
  which pairs this algebra with the truncated exponential employed
  below.
\end{remark}

On the nilradical of $A_B$ we have the (truncated) exponential map
$\Exp(x) = 1 + x + \frac12 x^2$ which is an isomorphism between the
nilradical $\nideal$ as an additive group and $1 + \nideal$ as a
multiplicative group
(see~\cite[Proposition~8.1]{LenstraSilverberg2018}).  It satisfies the
following elementary properties for $v, v' \in V$:
\begin{gather*}
  \Exp(v) = 1 + v + \frac12 Q(v)\zeta, \qquad
  \Exp(-v) = \Exp(v)^{-1}, \qquad
  \Exp(v+v') = \Exp(v) \Exp(v').
\end{gather*}

We now return to the concrete setting of \eqref{eq:Q-aff}
and~\eqref{eq:Q-inf} where $V = \QQ^N \times \QQ v_0$ and $Q$ is a
quadratic form on~$V$.  The vectors $v_1, \dots, v_N, v_0$ denote the
standard basis of~$V$, so $\dim V = N+1$.  The presentation of
$A_Q$ from Lemma~\ref{lemma:A_B} then uses the $N+1$ variables
$X_1, \dots, X_N, X_0$, one for each basis vector, and also~$Z$.

\begin{definition}
Let $\phi_Q\colon L_N \to A_Q$ be the ring homomorphism given by
\begin{equation*}
  S \mapsto \frac12 \Exp(v_0), \quad
  T \mapsto \frac12 \Exp(-v_0), \quad
  D_i \mapsto \Exp(v_i).
\end{equation*}
\end{definition}
This is a well-defined ring homomorphism from a Laurent ring since
$\Exp(x)$ is a unit for $x\in V$.

\begin{proposition} \label{prop:A_Q-eval} For every $d \in \ZZ^N$,
  primitive $e \in \ZZ^N$, and distinct nonzero $p, q \in \ZZ$ we have
\begin{equation}
  \label{eq:A_Q-eval}
  \phi_Q(\tau_d) = -\frac12 \, Q(d, 1)\zeta \quad\text{and}\quad
  \phi_Q(\theta_e^{p,q}) = \frac12 \, pq(p-q)\, Q(e, 0)\zeta.
\end{equation}
In particular $\tau_d \in \ker \phi_Q \iff Q(d, 1) = 0$
and $\theta_e^{p,q} \in \ker \phi_Q \iff Q(e, 0) = 0$.
\end{proposition}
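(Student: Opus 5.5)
The plan is to compute $\phi_Q$ on each monomial directly with the truncated exponential, using that $\Exp$ turns sums in $V$ into products in $1+\nideal$. For a Laurent monomial $S^aT^bD^d$ we get
\[
  \phi_Q(S^aT^bD^d) = 2^{-a-b}\,\Exp\bigl((a-b)v_0 + d\bigr)
  = 2^{-a-b}\Bigl(1 + (d,a-b) + \tfrac12 Q(d,a-b)\zeta\Bigr).
\]
This works because $\Exp(-v)=\Exp(v)^{-1}$ handles negative exponents, and the map $D^d\mapsto \Exp(\sum d_iv_i)$ is a homomorphism from the group of Laurent monomials. In particular we record $\phi_Q(SD^d) = \frac12\Exp((d,1))$, $\phi_Q(TD^{-d}) = \frac12\Exp(-(d,1))$, and $\phi_Q(D^{pe}) = \Exp(p(e,0))$.

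For the affine trinomial, add the two exponentials. The linear parts $\pm(d,1)$ cancel, and since $Q(-w)=Q(w)$ the $\zeta$-parts add:
\[
  \tfrac12\Exp(w)+\tfrac12\Exp(-w) = 1 + \tfrac12 Q(w)\zeta, \qquad w=(d,1).
\]
Hence $\phi_Q(\tau_d) = 1 - 1 - \frac12 Q(d,1)\zeta$, which is the first formula.

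For the infinite trinomial, set $w=(e,0)$ and expand $(p-q) + q\Exp(pw) - p\Exp(qw)$ by degree.
\begin{itemize}
  \item The constant part is $p-q+q-p=0$.
  \item The $V$-part is $qp\,w - pq\,w = 0$.
  \item The $\zeta$-part is $\frac12\bigl(qp^2 - pq^2\bigr)Q(w) = \frac12 pq(p-q)Q(e,0)$, using $Q(pw)=p^2Q(w)$.
\end{itemize}
This gives the second formula.

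The "in particular" claims follow because $\zeta\neq 0$ in $A_Q$, and the scalars $\frac12$ and $\frac12 pq(p-q)$ are nonzero since $p,q$ are nonzero and distinct.

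No step is a real obstacle. The only point needing care is justifying that $\Exp$ is multiplicative on all of $V$ with integer coefficients, including negative ones. This follows from $\Exp(v+v')=\Exp(v)\Exp(v')$ together with $\Exp(-v)=\Exp(v)^{-1}$, as stated before the definition of $\phi_Q$. The other ingredient is the identity $\Exp(v) = 1+v+\frac12 Q(v)\zeta$, which holds because $v\cdot v = B(v,v)\zeta = Q(v)\zeta$ in $A_B$.
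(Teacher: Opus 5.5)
Your proposal is correct and follows the paper's proof: it uses the same identity $\phi_Q(S^aT^bD^d) = 2^{-a-b}\Exp(d+(a-b)v_0)$ and the same expansion of each exponential into its constant, $V$- and $\zeta$-parts. Your explicit remarks that $\zeta \neq 0$ and that $\frac12 pq(p-q) \neq 0$, which give the ``in particular'' claims, fill in a step the paper leaves implicit.
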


\begin{proof}
The two formulas in \eqref{eq:A_Q-eval} follow by direct calculation,
starting from the identity
\[
  \phi_Q(S^a T^b D^d) = 2^{-a-b} \Exp(d + (a-b)v_0).
\]
Thus, for the affine trinomials,
\begin{align*}
  \phi_Q(\tau_d) &= 1 - \frac12 \Exp(d + v_0) - \frac12 \Exp(-d - v_0) \\
  &= 1 - \frac12 \left(1 + d + v_0 + \frac12 Q(d, 1)\zeta\right) -
         \frac12 \left(1 - d - v_0 + \frac12 Q(-d, -1)\zeta\right) \\
  &= -\frac14\left(Q(d, 1) + Q(-d, -1)\right)\zeta = -\frac12 Q(d, 1)\zeta.
\end{align*}
For the infinite type trinomials we have
\begin{align*}
  \phi_Q(\theta_e^{p,q}) &= (p-q) + q\Exp(pe) - p\Exp(qe) \\
  &= (p-q) + \left(q + pqe + \frac{p^2 q}{2} Q(e, 0)\zeta\right) -
             \left(p + pqe + \frac{p q^2}{2} Q(e, 0)\zeta\right) \\
  &= \frac{p^2q}{2} Q(e, 0) \zeta - \frac{pq^2}{2} Q(e, 0)\zeta = \frac12 \, pq(p-q) \, Q(e, 0)\zeta. \qedhere
\end{align*}
\end{proof}

\section{Encoding diophantine equations} \label{sec:encoding}

The MRDP theorem shows that Hilbert's 10th problem is undecidable: there is
no Turing machine which receives an integer $n$ and a polynomial $P \in
\ZZ[y_1, \dots, y_n]$ as input and decides $\mathcal Z_\ZZ(P) \neq \emptyset$.
Our main theorem encodes this problem directly into the question of whether
an ideal contains a trinomial.
To do so, we first perform a preprocessing step.
The evaluation of $P(y_1, \dots, y_n)$ can be encoded as a sequence
of additions and multiplications by introducing auxiliary variables.
For instance, $P(y_1, y_2) = 3y_1^2 + y_1 y_2 + 1$ can be encoded as
\begin{gather*}
  x_1 = y_1 \cdot y_1, \quad
  x_2 = 3 \cdot x_1, \quad
  x_3 = y_1 \cdot y_2, \quad
  x_4 = x_2 + x_3, \quad
  x_5 = x_4 + 1.
\end{gather*}
Such an encoding is known as a \emph{straight line
  program}~\cite[Chapter~4]{AlgComplexity}.  Eventually we reach an
auxiliary variable $x_r$ which is forced to equal $P(y)$ and to
enforce the existence of an integer solution to $P(y)=0$ we add the
equation $x_r = 0$.

\begin{lemma} \label{lemma:quadratic}
There is an algorithm which computes to any $P \in \ZZ[y_1, \dots, y_n]$
integers $r, m$ and a system of equations
$f_1, \dots, f_m \in \ZZ[x_1, \dots, x_r]$ such that
each $f_i$ has degree at most two and
\[
  \exists y \in \ZZ^n : P(y) = 0 \iff \exists x \in \ZZ^r : f_1(x) =
  \dots = f_m(x) = 0.
\]
\end{lemma}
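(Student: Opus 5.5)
We build a straight-line program for $P$ by structural recursion on a fixed syntactic representation of $P$. Concretely, write $P$ as a sum of terms $c_\alpha y^\alpha$. We introduce the variables $x_1 = y_1, \dots, x_n = y_n$ (so $r \ge n$ and the first $n$ coordinates of $x$ carry $y$). We also introduce a variable $x_{\mathrm{one}}$ together with the equation $x_{\mathrm{one}} - 1 = 0$.

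For each monomial $y^\alpha$ we compute it by repeated multiplication. Each new auxiliary variable $x_k$ comes with an equation of one of three forms:
\begin{itemize}
\item $x_k - x_i x_j = 0$;
\item $x_k - c\,x_i = 0$ for an integer constant $c$;
\item $x_k - x_i - x_j = 0$.
\end{itemize}
For the empty monomial we use $x_{\mathrm{one}}$. Summing the scaled monomials in a chain gives a final variable $x_r$, and we add the equation $x_r = 0$. Every equation is an integer polynomial of degree at most two, and the whole list is produced by an obvious terminating algorithm from the coefficient list of $P$. This yields $r$, $m$, and $f_1, \dots, f_m$.

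For correctness we argue both directions.

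\emph{From $P$ to the system.} Given $y \in \ZZ^n$ with $P(y)=0$, set $x_i = y_i$ and $x_{\mathrm{one}}=1$. Define each auxiliary $x_k$ by evaluating its defining right-hand side in order. All these values are integers, because they arise from integers by ring operations with integer constants. Every $f_i$ vanishes by construction. By induction along the program, $x_r = P(y) = 0$.

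\emph{From the system to $P$.} Given $x \in \ZZ^r$ solving the system, put $y = (x_1, \dots, x_n)$. Induction along the order of the program shows that each auxiliary variable equals the corresponding subexpression evaluated at $y$. This holds because each equation determines $x_k$ uniquely from earlier variables; the constant variable is forced to be $1$. Hence $x_r = P(y)$, and the equation $x_r = 0$ gives $P(y) = 0$.

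There is no real obstacle here. The only points needing care are two bookkeeping details. First, each auxiliary variable must be defined exactly once and only in terms of earlier variables, so that the induction in the reverse direction goes through. Second, integer constants must be handled either by the linear equations $x_k - c\,x_i$ or via $x_{\mathrm{one}}$, so that all $f_i$ lie in $\ZZ[x]$ with degree at most two.
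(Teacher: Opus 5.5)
Your proposal is correct and is essentially the paper's argument. Like the paper, you encode the evaluation of $P$ as a straight-line program in which each auxiliary variable is defined from earlier ones by a single multiplication, scalar multiplication or addition, and then add $x_r = 0$. The only cosmetic difference is that the paper puts integer constants directly into the defining equations (as in $x_5 = x_4 + 1$), whereas you route them through $x_{\mathrm{one}}$ with $x_{\mathrm{one}} - 1 = 0$; your two-direction induction spells out what the paper leaves implicit.
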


This replaces a single equation with a system of equations, each of degree
at most two, preserving integral solvability in both directions. To impose
them on the exponent vectors of trinomials in the base ideal~$J_0$ via
Proposition~\ref{prop:A_Q-eval}, these equations must be homogenized.
This step introduces spurious solutions at infinity and at first breaks
the equivalence of integral solvability, a defect the following lemma~remedies.

\begin{lemma} \label{lemma:Pell-guard}
Let $f_1, \dots, f_m \in \ZZ[x_1, \dots, x_r]$ be polynomials of degree
at most two and let $X \subseteq \ZZ^r$ denote their common zero set.
Adjoin six variables $h, k, u_1, u_2, u_3, u_4$ as well as another
homogenization variable~$t$.  To each $f_i$ associate its degree-two
homogenization $g_i(x,t) \defas t^2 f_i(x_1/t, \dots, x_r/t)$. Also add
the two quadratic equations
\begin{align*}
  g_{m+1}(h,k,t) &= h^2 - 3k^2 - t^2, \\
  g_{m+2}(x,h,u_1,u_2,u_3,u_4,t) &= \sum_{i=1}^r x_i^2 + u_1^2 + u_2^2 + u_3^2 + u_4^2 - ht.
\end{align*}
Let $\hat X \subseteq \ZZ^{r+7}$ denote the solution set of the resulting
homogeneous quadratic system $g_1, \dots, g_{m+2}$, whose points we write
as $(d, t)$ with $d = (x, h, k, u_1, \dots, u_4) \in \ZZ^{r+6}$. Then:
\begin{itemize}[itemsep=0.4em]
\item The origin is the only rational solution of
  $g_1, \dots, g_{m+2}$ with $t = 0$.
\item The projection map $\pi\colon \hat X_1 = \hat X \cap \{ t=1 \} \to X$ is
  surjective.
\item There is a computable section $s\colon X \to \hat X_1$ of $\pi$.
\end{itemize}
In particular, $X = \emptyset \iff \hat X_1 = \emptyset$.
\end{lemma}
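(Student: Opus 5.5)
The plan is to verify the three bullet points in order; the final equivalence then follows because $\pi$ is a surjection $\hat X_1 \to X$.

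\textbf{Solutions at infinity.} Let $(d,0)$ be a rational solution with $t = 0$. Then $g_{m+1}$ reduces to $h^2 - 3k^2 = 0$. Since $\sqrt 3$ is irrational, the only rational solution is $h = k = 0$. Next, $g_{m+2}$ at $t = 0$ becomes $\sum_i x_i^2 + \sum_j u_j^2 = 0$, a positive definite form over $\QQ$, so all $x_i$ and $u_j$ vanish. The equations $g_1, \dots, g_m$ are then automatically satisfied, so the origin is the only rational solution with $t=0$. This explains the role of the Pell guard: it pins $h,k$ to zero at infinity, and the sum-of-squares guard, tied to $ht$, kills the rest.

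\textbf{Surjectivity of $\pi$ and the section.} Setting $t=1$, the equations $g_i(x,1) = f_i(x)$ for $i \le m$ say exactly that $x \in X$. It remains to show that every $x \in X$ extends to integers $h,k,u_1,\dots,u_4$ with
\[
  h^2 - 3k^2 = 1 \quad\text{and}\quad h = \|x\|^2 + u_1^2 + \dots + u_4^2 .
\]
By Lagrange's four-square theorem, the second equation is solvable in the $u_j$ whenever $h - \|x\|^2 \ge 0$. So it suffices to find a Pell solution $(h,k)$ with $h \ge \|x\|^2$. The Pell equation $h^2 - 3k^2 = 1$ has the solutions $h_n + k_n\sqrt3 = (2+\sqrt3)^n$, and $h_n$ is strictly increasing and unbounded. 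We take the least $n$ with $h_n \ge \|x\|^2$. Then we compute a four-square representation of $h_n - \|x\|^2$ by bounded exhaustive search, each $u_j$ ranging over $|u_j| \le \sqrt{h_n}$. This search is finite and guaranteed to succeed by Lagrange's theorem, so $s(x) = (x,h_n,k_n,u,1)$ is computable and satisfies $\pi\circ s = \mathrm{id}$. Surjectivity of $\pi$ follows, and together with the trivial implication $\hat X_1 \neq \emptyset \Rightarrow X \neq \emptyset$ (via $\pi$) this gives the final equivalence.

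\textbf{Expected obstacle.} Very little here is hard. The one point needing care is the irrationality argument at infinity: we must use rational, not merely integral, solutions, and $h^2 = 3k^2$ with $h,k \in \QQ$ forces $h=k=0$ by unique factorization. I would also make explicit that $h$ must be unbounded along Pell solutions, which follows from $(2+\sqrt3)^n \to \infty$. Without this, the four-square step could fail for large $\|x\|$.
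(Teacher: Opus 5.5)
Your proposal is correct and follows the paper's proof essentially step for step. At $t=0$ the anisotropy of $h^2-3k^2$ and of the sum of squares rules out every nonzero rational point. At $t=1$ you take the first Pell solution from powers of $2+\sqrt3$ with $h \ge \sum_i x_i^2$ and close the gap with a four-squares witness found by exhaustive search. Your explicit bound $|u_j|\le\sqrt{h_n}$ and the remark that $\pi$ maps $\hat X_1$ into $X$ because $g_i(x,1)=f_i(x)$ are small clarifications that the paper leaves implicit.
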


\begin{proof}
A rational solution of the system with $t = 0$ satisfies
\[
  0 = g_{m+1}(h,k,0) = h^2 - 3k^2,
\]
and since $3$ is not a rational square, this forces $h = k = 0$.  And
$g_{m+2} = 0$ becomes
\[
  x_1^2 + \dots + x_r^2 + u_1^2 + u_2^2 + u_3^2 + u_4^2 = 0,
\]
so all remaining coordinates vanish as well and the point is the
origin.

Now let $x \in X$ and fix the affine chart $t=1$. The~goal is to
construct $h, k, u_1, u_2, u_3, u_4$ which complete $x$ to a point in
$\hat X_1$.  The~equation $0 = g_{m+1}(h,k,1) = h^2 - 3k^2 - 1$ is a
\emph{Pell equation}.  The classic book by
Barbeau~\cite[Section~3]{Barbeau} covers elementary properties and
constructions.  We~only need the lemma that $h^2 - 3k^2 = 1$ has
integral solutions with arbitrarily large~$h$. Choose such a solution
where $\Delta = h - \sum_{i=1}^r x_i^2 \geq 0$. The difference
$\Delta$ is a nonnegative integer and by Lagrange's four-squares
theorem it is the sum of four squares:
$\Delta = u_1^2 + u_2^2 + u_3^2 + u_4^2$. Now the vector
$\ol{x} = (x, h, k, u_1, u_2, u_3, u_4, 1)$ satisfies
$g_1, \dots, g_m$ as well as $g_{m+1}$ and $g_{m+2}$ by
construction. Thus it belongs to $\hat X_1$.
The choice can be made effective.  Take for $(h,k)$ the first term of the
Pell solution sequence generated by $2 + \sqrt 3$ with $h$ large enough,
and find the four-squares witness $(u_1,u_2,u_3,u_4)$ by exhaustive
search. Hence, there is a computable section~$s$.
\end{proof}

The two additional equations act as so-called \emph{guards} in the
sense of computer science.  Their sole purpose is to block unwanted
states, here nonzero rational solutions at infinity.  They achieve
this because their restrictions to the hyperplane $\{t = 0\}$, namely
$h^2 - 3k^2$ and a sum of squares, are \emph{anisotropic} quadratic
forms, i.e.~forms without nontrivial rational zeros.  The following
remark explains why guarding the hyperplane at infinity is essential
for the encoding.

\begin{remark} \label{rem:poonen}
The passage from affine to projective in Lemma~\ref{lemma:Pell-guard} has a
well-known counterpart over~$\QQ$, where no guards are necessary.  A
homogeneous system has a nontrivial rational zero if and only if it has a
nontrivial integral one~\cite[Section~7.4]{Matiyasevich1993}, and by an
effective argument of R.~Robinson, recorded
in~\cite[Remark~1.2(c)]{Poonen2009}, rational solvability of arbitrary systems
reduces to nontrivial rational solvability of homogeneous ones.  Poonen used
this to show that deciding the existence of rational points on smooth
projective varieties over~$\QQ$ is as hard as Hilbert's 10th problem
over~$\QQ$, whose decidability is a famous open
problem~\cite{PoonenNotices,KoymansPagano2026}.  This contrast explains the role of the guards.  The
exponent vector of an infinite trinomial $\theta_e^{p,q}$ is meaningful only
up to scaling, so the infinite trinomials of an ideal see the
\emph{rational} points of the quadrics on the hyperplane at infinity.
Without guards, rational points at infinity yield  infinite
trinomials in the constructed ideals independently of whether $P$ has an
integral zero.  Controlling them would amount to an instance of
Hilbert's 10th problem over~$\QQ$.  The guards of
Lemma~\ref{lemma:Pell-guard} empty the hyperplane at infinity of
rational points, and only the original integral problem remains.
\end{remark}

\begin{remark} \label{rem:infinite-forced}
To get undecidability of trinomial detection in ideals from Hilbert's 10th
problem, it would be easier to only encode affine equations~\eqref{eq:Q-aff}
and get rid of the infinite trinomials.
However, our choice of affine trinomials forces the infinite
trinomials.  Suppose that an ideal of $L_N$ contains affine trinomials
$\tau_{d-e}, \tau_{d}, \tau_{d+e}$ for some nonzero $e \in \ZZ^N$
(which it reasonably might if they are all solutions to some equation)
then it would necessarily also contain an infinite trinomial. We have
\[
  \tau_{d-e} + \tau_{d+e} - (D^e + D^{-e})\tau_d = 2 - D^e - D^{-e}.
\]
Then writing $e = g e'$ with $e'$ primitive and $g \geq 1$, we have
$2 - D^e - D^{-e} = \frac{1}{g} \theta_{e'}^{g,-g}$.
\end{remark}

\begin{theorem} \label{thm:main} There is an algorithm which computes
  to any polynomial $P \in \ZZ[y_1, \dots, y_n]$ an integer $N$ and a
  list of at most $\binom{N+7}{5}$ generators, each of total degree at
  most five, of an ideal $I_P \subseteq \QQ[S, T, D_1, \dots, D_N]$
  such that:
\begin{itemize}[itemsep=0.4em]
\item $I_P$ is primary to the maximal ideal
  $\mideal = \left(2S-1,\ 2T-1,\ D_1-1,\ \dots,\ D_N-1\right)$ and therefore proper and of Krull dimension zero,
\item $I_P$ contains no monomial and no binomial,
\item $I_P$ contains the quadrinomial $S^2 - T^2 - S + T$,
\item every trinomial in $I_P$ is a term times a cleared affine
  trinomial, and
\[
  \exists d\in \ZZ^N : \tilde\tau_d \in I_P \iff \exists y\in \ZZ^n : P(y) = 0.
\]
\end{itemize}
\end{theorem}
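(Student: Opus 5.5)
Apply Lemma~\ref{lemma:quadratic} to $P$ and then Lemma~\ref{lemma:Pell-guard}. This produces $N = r+6$ and homogeneous quadratic forms $g_1,\dots,g_{m+2}$ on $V=\QQ^{N+1}$, with $v_0$ playing the role of $t$. Write $Q_j \defas g_j$ and set
\[
  J \defas J_0 \cap \bigcap_{j=1}^{m+2} \ker\phi_{Q_j} \subseteq L_N .
\]
Equivalently, $J$ is the kernel of the product map $\Phi = (\pi_0,\phi_{Q_1},\dots,\phi_{Q_{m+2}})\colon L_N\to A_0\times\prod_j A_{Q_j}$. This codomain is a finite-dimensional $\QQ$-algebra, and every factor is local with maximal ideal the image of $\mideal$: $S,T,D_i$ are sent to $\frac12,\frac12,1$ plus nilpotents. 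Define $I_P \defas J\cap \QQ[S,T,D]$, which is the kernel of the restriction of $\Phi$ to the polynomial ring. Since $\mideal^5$ maps to zero in every factor (in $A_{Q_j}$ already $\nideal^3=0$), we get $\mideal^5\subseteq I_P\subseteq\mideal$, so $I_P$ is $\mideal$-primary.

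For computability, $I_P$ is generated by $\mideal^5$ together with the kernel of the $\QQ$-linear map on the space of polynomials of degree $\le 4$ in the shifted variables $2S-1,2T-1,D_i-1$. Every element of that kernel has degree $\le 4$, and the generators of $\mideal^5$ have degree $5$. The kernel is obtained by linear algebra, using Lemma~\ref{lemma:J0-nf} and the explicit $\Exp$ formulas for the images. The number of monomials of degree $\le 5$ in $N+2$ variables is $\binom{N+7}{5}$, which bounds the generator count. This handles the first bullet and the computability claim.

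For the trinomial statements, first observe that a polynomial $f\in\QQ[S,T,D]$ lies in $I_P$ if and only if $f\in J$. Moreover, multiplying by a term (a unit of $L_N$) does not change the number of terms, so $f$ is a $k$-nomial in $I_P$ only if some Laurent $k$-nomial lies in $J\subseteq J_0$. Proposition~\ref{prop:shape} then shows that $I_P$ contains no monomial or binomial. It also shows that every trinomial of $I_P$ is a term times $\tau_d$ or $\theta_e^{p,q}$.

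Next apply Proposition~\ref{prop:A_Q-eval}. The element $\theta_e^{p,q}$ lies in $J$ exactly when $Q_j(e,0)=0$ for all $j$. The first bullet of Lemma~\ref{lemma:Pell-guard} forbids this for $e\neq0$, so no infinite trinomial survives. Likewise $\tau_d\in J$ exactly when $(d,1)\in\hat X$, i.e.\ $d\in\hat X_1$. Since $\tilde\tau_d\in I_P\iff\tau_d\in J$, the equivalence with $\mathcal Z_\ZZ(P)\neq\emptyset$ follows from $X=\emptyset\iff\hat X_1=\emptyset$ together with Lemma~\ref{lemma:quadratic}. A term times $\tau_d$ that is a polynomial is a term times $\tilde\tau_d$, which settles the shape claim.

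The quadrinomial $\Omega$ lies in $J_0$. To see $\Omega\in\ker\phi_Q$, compute
\[
  \phi_Q(S+T-1)=\tfrac12\bigl(\Exp(v_0)+\Exp(-v_0)\bigr)-1=\tfrac12 Q(v_0)\zeta,
\]
and note $\phi_Q(S-T)=v_0+0\cdot\zeta$ up to terms in $\nideal^2$. Their product lies in $\nideal^3=0$.

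I expect the main obstacle to be the bookkeeping in the polynomial-ring passage. Two points need care. First, $I_P$ must really be the contraction of $J$; this holds because the localization is an isomorphism modulo an $\mideal$-primary ideal, since $S,T,D_i$ are units modulo $\mideal^5$. Second, the degree and count bounds must come out exactly as stated. The remaining steps are direct applications of the earlier results.
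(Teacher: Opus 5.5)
Your proposal is correct and follows essentially the same route as the paper. Both realize $I_P$ as the kernel of the product map to $A_0\times\prod_j A_{Q_j}$ and get $\mideal^5\subseteq I_P$ from nilpotency; both split off the degree-$\le 4$ part to obtain $\binom{N+6}{5}+\binom{N+6}{4}=\binom{N+7}{5}$ generators; both classify surviving trinomials via Propositions~\ref{prop:shape} and~\ref{prop:A_Q-eval} with the guard of Lemma~\ref{lemma:Pell-guard}, and show $\Omega\in\ker\phi_Q$ via $\zeta V=0$. The localization worry in your last paragraph is unnecessary: $I_P$ is defined as the contraction, so it is directly the kernel of $\Phi$ restricted to $\QQ[S,T,D_1,\dots,D_N]$.
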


\begin{proof}
  Use Lemmas~\ref{lemma:quadratic}~and~\ref{lemma:Pell-guard} to
  transform $P$ into a system of homogeneous quadratic forms
  $Q_1, \dots, Q_M$ in the $N+1$ variables $(x_1, \dots, x_N, t)$,
  with homogenization variable~$t$, such that
  \[
    \exists d \in \ZZ^N : Q_1(d,1) = \dots = Q_M(d,1) = 0
    \iff \exists y \in \ZZ^n : P(y) = 0,
  \]
  and such that no nonzero rational $e \in \QQ^N$ satisfies
  $Q_1(e,0) = \dots = Q_M(e,0) = 0$.  The number $N$ of affine
  variables determines the Laurent ring
  $L_N = \QQ[S^{\pm1}, T^{\pm1}, D_1^{\pm1}, \dots, D_N^{\pm1}]$.

  For each quadratic form $Q_i$ compute the finitely presented algebra
  $A_{Q_i}$ from Section~\ref{sec:quadrics} and the homomorphism
  $\phi_{Q_i}\colon L_N \to A_{Q_i}$.  Put
  \[
    J_P \defas J_0 \cap \bigcap_{i=1}^M \ker \phi_{Q_i}
    \qquad\text{and}\qquad
    I_P \defas J_P \cap \QQ[S, T, D_1, \dots, D_N].
  \]
  Since $J_0 = \ker(L_N \to A_0)$, the contraction $I_P$ is the kernel
  of the unital homomorphism
  \[
    \psi\colon \QQ[S, T, D_1, \dots, D_N] \to
    A_0 \times \prod_{i=1}^M A_{Q_i}
  \]
  which sends each variable to the tuple of its images. Let
  $\mideal = (2S-1, 2T-1, D_1-1, \dots, D_N-1)$ be the
  maximal ideal from the statement. We claim
  $\mideal^5 \subseteq I_P$.  First, under $\QQ[S,T,D] \to A_0$ the
  generators of $\mideal$ map, respectively, to $B$, $-B$,
  $C_1, \dots, C_N$ by Lemma~\ref{lemma:J0-nf}, and thus
  $(B, C_1, \dots, C_N)^5 = 0$ in $A_0$ since
  $B^5 = B C_i = C_i C_j = 0$.  Under $\phi_{Q_i}$ they map to
  $\Exp(v_0)-1$, $\Exp(-v_0)-1$, $\Exp(v_j)-1$, which lie in the
  nilradical $\nideal$ of $A_{Q_i}$, and $\nideal^3 = 0$
  (Lemma~\ref{lemma:A_B}).  So every product of five generators of
  $\mideal$ maps to zero in every factor, i.e.\
  $\mideal^5 \subseteq \ker\psi = I_P$.

  Let $W \subseteq \QQ[S, T, D_1, \dots, D_N]$ be the span of all
  products of at most four generators of~$\mideal$.  The generators
  $2S-1$, $2T-1$, $D_1-1, \dots, D_N-1$ are affine coordinates on
  $\QQ[S, T, D_1, \dots, D_N]$, and the change of coordinates
  preserves total degree.  Hence the $\binom{N+6}{4}$ monomials of
  degree at most four in these coordinates form a basis of~$W$, the
  $\binom{N+6}{5}$ monomials of degree five generate $\mideal^5$, and
  $\QQ[S, T, D_1, \dots, D_N] = W \oplus \mideal^5$.  Since
  $\mideal^5 \subseteq I_P$, we have
  \begin{equation}
    \label{eq:m5pplusKernel}
    I_P = \mideal^5 + \ker(\psi|_W).
  \end{equation}
  The matrix of $\psi|_W$ with respect to the monomial basis of~$W$
  is computable.  The codomain has dimension $(N+5) + M(N+3)$ by
  Lemma~\ref{lemma:A0-dim} and $\dim_\QQ A_{Q_i} = N+3$, and the
  entries follow from Lemma~\ref{lemma:J0-nf} and from
  $\phi_Q(S^a T^b D^d) = 2^{-a-b} \Exp(d + (a-b) v_0)$ as in
  Proposition~\ref{prop:A_Q-eval}.  A basis of $\ker(\psi|_W)$ is
  computed by linear algebra over~$\QQ$, so by~\eqref{eq:m5pplusKernel}
  $I_{P}$ is generated by at most
  $\binom{N+6}{5} + \binom{N+6}{4} = \binom{N+7}{5}$ polynomials of
  total degree at most five.  Since $\psi(1) = 1 \neq 0$, the ideal
  $I_P$ is proper, and $\mideal^5 \subseteq I_P$ then gives
  $\sqrt{I_P} = \mideal$.  An ideal whose radical is maximal is
  primary, so $I_P$ is $\mideal$-primary and of Krull dimension zero.

  By Propositions~\ref{prop:shape}~and~\ref{prop:A_Q-eval}, up to
  units of~$L_N$, the trinomials in $J_P$ are precisely those affine
  and infinite type trinomials $\tau_d$ and $\theta_e^{p,q}$ which
  satisfy, respectively, $Q_i(d, 1) = 0$ and $Q_i(e, 0) = 0$ for all
  $i = 1, \dots, M$.  Since $J_P \subseteq J_0$,
  Proposition~\ref{prop:shape} also shows that $J_P$ contains no
  monomial and no binomial.
  Lemma~\ref{lemma:Pell-guard} ensures that no infinite trinomial
  survives and that some $\tau_d$ with $d \in \ZZ^N$ lies in $J_P$ if
  and only if $P$ has an integral zero.
  Each kernel $\ker\phi_{Q_i}$ contains the quadrinomial
  $\Omega = (S-T)(S+T-1) = S^2 - T^2 - S + T$.  Indeed,
  $\phi_{Q_i}(S+T-1) = \frac12 Q_i(v_0)\zeta$ and
  $\phi_{Q_i}(S-T) = v_0$, whose product vanishes since $\zeta V =
  0$.  As $\Omega \in J_0$ by Proposition~\ref{prop:shape} and
  $\Omega$ is an ordinary polynomial, $\Omega \in \ker\psi = I_P$.

  Since $D^{|d|}$ is a unit of $L_N$,
  $\tau_d \in J_P \iff \tilde\tau_d \in J_P$, and since
  $\tilde\tau_d$ is an ordinary polynomial,
  $\tilde\tau_d \in J_P \iff \tilde\tau_d \in I_P$.  Together with
  the criterion for $\tau_d \in J_P$ established above, this yields
  the equivalence claimed in the theorem.  Some cleared affine
  trinomial $\tilde\tau_d$ lies in $I_P$ if and only if $P$ has an
  integral zero.  To see that every trinomial of $I_P$ is a term
  times a cleared affine trinomial, let $f \in I_P$ be a trinomial.
  As a trinomial of $J_P$ it takes the form
  $f = c\, S^aT^bD^m \tau_d$ with a nonzero scalar~$c$, and since all
  three exponent vectors of the polynomial $f$ are nonnegative,
  $a, b \geq 0$ and $m \geq |d|$ componentwise.  Hence
  $f = c\, S^aT^bD^{m-|d|}\, \tilde\tau_d$ is a term times a cleared
  affine trinomial.  Since $I_P \subseteq J_P$ and $J_P$ contains no
  monomial and no binomial, neither does $I_P$.
\end{proof}

We now record further special properties of our construction. Most of
them are not direct corollaries to the statement of
Theorem~\ref{thm:main} but to its proof.  Two decision problems are
\emph{many-one equivalent} if each can be translated into the other by
a computable map of instances that preserves YES and NO answers but is
not necessarily one-to-one.

\begin{corollary} \label{cor:undecidable}
The following decision problems about ideals $I \subseteq \QQ[x_1, \dots, x_k]$,
presented by finite lists of generators, are many-one equivalent to
the halting problem:
\begin{enumerate}[label=(\Alph*), itemsep=0.4em]
\item \label{cor:Undec-3}      Whether $I$ contains a trinomial.
\item \label{cor:Undec-le-3}   Whether $I$ contains a nonzero polynomial with at most three terms.
\item \label{cor:Undec-3-or-4} Under the promise that $\tinv(I) \in \{3, 4\}$,
  whether $\tinv(I) = 3$.
\end{enumerate}
This equivalence persists if the ideals are restricted to be
zero-dimensional.
\end{corollary}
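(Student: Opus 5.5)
Each of the three problems has to be placed below the halting problem and the halting problem has to be placed below each of them. For hardness, we compose known reductions. By the MRDP theorem there is a computable map sending an instance of the halting problem to a polynomial $P$ with $\mathcal Z_\ZZ(P) \neq \emptyset$ exactly when the machine halts. Theorem~\ref{thm:main} then computes generators of $I_P$, which sits in a polynomial ring in $k = N+2$ variables. The ideal $I_P$ contains no monomial and no binomial, and it contains the quadrinomial $\Omega$. Hence $\tinv(I_P) \in \{3,4\}$ always holds, so the promise in \ref{cor:Undec-3-or-4} is satisfied. On these instances the three questions coincide: $\tinv(I_P)=3$ holds if and only if $I_P$ contains a trinomial, which holds if and only if $I_P$ contains a nonzero polynomial with at most three terms. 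By Theorem~\ref{thm:main} each of these holds if and only if $P$ has an integral zero. So a single computable map reduces the halting problem to \ref{cor:Undec-3}, \ref{cor:Undec-le-3} and \ref{cor:Undec-3-or-4} simultaneously. Since $I_P$ is zero-dimensional, the same holds for the restricted version.

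For the converse reductions, each problem is computably enumerable, as observed in the introduction. Fix a support $E$ of size at most three (respectively exactly three). Whether $I$ contains a nonzero polynomial with support contained in $E$ is decided by Gröbner bases, namely by computing normal forms of the monomials $x^a$, $a \in E$, and testing them for linear dependence. For exact trinomial support, one additionally checks whether the solution space of that linear system contains a vector with all three coordinates nonzero. This is decidable because a linear subspace of $\QQ^3$ lies in the union of the coordinate hyperplanes only if it lies in one of them. Dovetailing over all finite $E$ gives a semi-decision procedure. Any c.e.\ problem many-one reduces to the halting problem: map $I$ to a Turing machine that runs this search on $I$ and halts exactly when it succeeds.

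For the promise problem \ref{cor:Undec-3-or-4}, we use the same semi-decision procedure for ``$\tinv(I)\le 3$'' and interpret many-one equivalence for promise problems in the standard way. The reduction from the halting problem lands inside the promise, and the reduction to the halting problem needs to be correct only on instances that satisfy the promise. The zero-dimensional restriction does not affect the upper bound. The only step needing care is checking that the hardness reduction actually satisfies the promise, which rests on the second and third items of Theorem~\ref{thm:main}. I do not expect a genuine obstacle here. The main point is to keep precise that all three equivalences come from one map, and to handle the exact-three-terms support condition in the semidecision step.
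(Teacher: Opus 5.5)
Your proposal is correct and follows the paper's proof. The upper bound comes from computable enumerability via Gröbner-basis and linear-algebra tests on finite supports, and hardness comes from composing the MRDP theorem with Theorem~\ref{thm:main}, whose zero-dimensional ideals $I_P$ satisfy $\tinv(I_P)\in\{3,4\}$ and make all three questions equivalent to $\mathcal Z_\ZZ(P)\neq\emptyset$. Your extra care with the exact-three-terms check and with the promise-problem reading of many-one equivalence only spells out details the paper leaves implicit.
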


\begin{proof}
  These problems are computably enumerable and hence reduce to the
  halting problem.  The finite exponent sets can be enumerated
  effectively, and Gr\"obner bases and linear algebra on the residues
  decides whether $I$ contains a nonzero polynomial with support
  contained in, or equal to, a given set.

  For the converse reduction we start from Hilbert's 10th problem,
  which is many-one equivalent to the halting problem by the MRDP
  theorem. The algorithm of Theorem~\ref{thm:main} computes from $P$ a
  zero-dimensional ideal $I_P$ which contains the quadrinomial
  $\Omega$, no monomial, and no binomial, and which contains a
  trinomial if and only if $P$ has an integral zero. Hence all three
  questions, asked of $I_P$, have answer YES if and only if $P$ has an
  integral zero.
\end{proof}

\begin{remark}\label{rem:geometry}
Our construction preserves more than the bare solvability of the
diophantine equation. Write $\mathscr{T}(J_P)$ for the set of
trinomials in $J_P$ up to multiplication by units of~$L_N$. In~the
notation of Lemma~\ref{lemma:Pell-guard}, the proof of
Theorem~\ref{thm:main} yields a chain
\[
  \mathscr{T}(J_P) \xrightarrow{\ \sim\ } \hat X_1
  \overset{\pi}{\twoheadrightarrow} X \xrightarrow{\ \sim\ } Y
  = \Set{ y \in \ZZ^n : P(y) = 0 }.
\]
The bijection on the left sends the class of $\tau_d$ to $(d,1)$. The
bijection on the right is projection onto the $y$-coordinates: in the
system produced by Lemma~\ref{lemma:quadratic} each auxiliary variable
is a polynomial in the preceding ones, so the system defines a scheme
isomorphic over $\ZZ$ to the hypersurface $P = 0$. The surjection
$\pi$ of Lemma~\ref{lemma:Pell-guard} forgets the guard coordinates.
It admits a computable section, and each of its fibers is infinite,
parametrized by the sufficiently large solutions of the Pell equation
together with four-squares witnesses. Consequently $J_P$ contains
either no trinomial at all or infinitely many pairwise nonassociated
ones, one computably singled out for each integral zero of~$P$.
\end{remark}

\section{A universal halting family} \label{sec:universal}

Fix a G\"odel numbering $(M_e)_{e \in \NN}$ of Turing machines and let
\[
  K_0 \defas \Set{ e \in \NN : \text{$M_e(0)$ halts} }
\]
be the halting set, which is computably enumerable but not
decidable~\cite[Section~1.9]{Rogers1967}. The MRDP theorem, followed by the replacement of
natural number variables by sums of four integer squares, gives a
polynomial $U \in \ZZ[E, Y_1, \dots, Y_r]$ such that
\begin{equation} \label{eq:universal-polynomial}
  e \in K_0 \iff \exists y \in \ZZ^r : U(e, y) = 0.
\end{equation}

Putting the universal diophantine equation into our framework shows
that there is a single polynomial ring in which we can compute a family
of ideals whose trinomial containment encodes the halting problem.
The ideals are even zero-dimensional and primary with uniformly bounded
colength, generator count, and generator degree.
\begin{corollary} \label{cor:halting-family}
There exist a positive integer $N$ and a computable sequence of ideals
$I_e \subseteq \QQ[S, T, D_1, \dots, D_N]$, indexed by $e \in \NN$,
such that
\[
  \tinv(I_e) =
  \begin{cases}
    3, & \text{$M_e(0)$ halts}, \\
    4, & \text{$M_e(0)$ does not halt}.
  \end{cases}
\]
Each $I_e$ is primary to the maximal ideal
$\mideal = \left(2S-1,\ 2T-1,\ D_1-1,\ \dots,\ D_N-1\right)$, in
particular proper and zero-dimensional, has colength
$\dim_\QQ(\QQ[S, T, D_1, \dots, D_N] / I_e) \leq \binom{N+6}{4}$, and
is generated by at most $\binom{N+7}{5}$ polynomials of total degree
at most five.
\end{corollary}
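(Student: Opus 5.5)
The plan is to apply the construction in the proof of Theorem~\ref{thm:main} to the specialized polynomials $P_e(Y) \defas U(e, Y) \in \ZZ[Y_1, \dots, Y_r]$ from~\eqref{eq:universal-polynomial}. The key point is uniformity: $N$ and the shape of the ideals must not depend on~$e$.

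\textbf{Step 1: a uniform straight-line program.} Apply Lemma~\ref{lemma:quadratic} once, symbolically, to $U$ with $E$ treated as an ordinary variable. This yields a quadratic system $f_1, \dots, f_m$ in variables $x_1, \dots, x_r$, where one variable, say $x_1$, equals~$E$. Add the linear equation $x_1 - e = 0$. The system for each $e$ then has the same number of variables; only the constant term of one linear equation changes. Applying Lemma~\ref{lemma:Pell-guard} gives homogeneous forms $Q_1, \dots, Q_M$ in a fixed number $N+1$ of variables. Here $N = r' + 6$ and $M = m + 3$ are independent of~$e$, and only the single form $Q_{e} = t(x_1 - e t)$ depends on~$e$. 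Note that this form vanishes on $t=0$, so the guards still exclude rational points at infinity; the guard argument never used the $f_i$. Then $\hat X_1 \neq \emptyset$ if and only if $U(e, \cdot)$ has an integral zero, which holds if and only if $e \in K_0$.

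\textbf{Step 2: define $I_e$.} Set $I_e \defas \ker \psi_e$ exactly as in the proof of Theorem~\ref{thm:main}. The map $e \mapsto$ (generators of $I_e$) is computable, since $\psi_e|_W$ is an explicit matrix whose entries are computable from~$e$, and we take its kernel together with the monomials generating~$\mideal^5$. Everything in Theorem~\ref{thm:main} then transfers verbatim. $I_e$ is $\mideal$-primary, is generated by at most $\binom{N+7}{5}$ polynomials of degree at most five, contains $\Omega$ (so $\tinv \le 4$), and contains no monomial or binomial (so $\tinv \ge 3$). It contains a trinomial if and only if $e \in K_0$. This gives the dichotomy $\tinv(I_e) \in \{3,4\}$ as stated.

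\textbf{Step 3: colength.} Since $\mideal^5 \subseteq I_e$, the decomposition $\QQ[S,T,D] = W \oplus \mideal^5$ from that proof gives a surjection $W \to \QQ[S,T,D]/I_e$. Hence the colength is at most $\dim W = \binom{N+6}{4}$.

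The main obstacle I expect is Step~1, namely making the encoding uniform in~$e$. If $e$ were plugged into $U$ before running Lemma~\ref{lemma:quadratic}, the straight-line program could depend on $e$, for instance through the size of constants, and $N$ would drift with~$e$. Treating $E$ as a variable and imposing $x_1 = e$ by one linear equation avoids this. I would also double-check that this extra equation, after homogenization, adds no nonzero rational points at infinity. The check is simple: at $t=0$ the form $t(x_1 - et)$ vanishes identically, so the set of points at infinity is unchanged, and it is still empty by the guards.
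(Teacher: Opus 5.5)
Your proposal is correct and follows the paper's proof. You run the construction of Theorem~\ref{thm:main} uniformly on $U(e,\cdot)$, get $\tinv(I_e)\in\{3,4\}$ from $\Omega$ and the absence of monomials and binomials, and bound the colength by $\dim W=\binom{N+6}{4}$ using $\mideal^5\subseteq I_e$. The only difference is how uniformity in $e$ is achieved: the paper substitutes the constant $e$ into one fixed straight-line program for $U$ without simplifying, so $e$ may appear as a coefficient in several forms, while you keep $E$ as a coordinate and pin it with the single extra form $t(x_1-et)$, which costs one variable but confines all $e$-dependence to one coefficient and, as you correctly check, adds no rational points at infinity.
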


\begin{proof}
Apply the construction of Theorem~\ref{thm:main} to the polynomials
$P_e(Y) \defas U(e, Y) \in \ZZ[Y_1, \dots, Y_r]$ and write
$I_e \defas I_{P_e}$.  Substituting the constant $e$ for the variable
$E$ in one fixed straight line program for $U$, without simplifying,
gives programs of the same shape for every~$e$.
Lemmas~\ref{lemma:quadratic}~and~\ref{lemma:Pell-guard} therefore
produce the same number of homogeneous quadratic forms in the same
$N+1$ coordinates, and only their coefficients depend on~$e$.  The
ring $\QQ[S, T, D_1, \dots, D_N]$ and the maximal ideal $\mideal$ are
thus the same for all~$e$, and the generators of $I_e$ are computed
from~$e$ by the algorithm of Theorem~\ref{thm:main}.

Theorem~\ref{thm:main} shows that $I_e$ is $\mideal$-primary and
generated by at most $\binom{N+7}{5}$ polynomials of total degree at
most five.  Its proof shows $\mideal^5 \subseteq I_e$, so the quotient
$\QQ[S, T, D_1, \dots, D_N]/I_e$ is spanned by the residues of the
$\binom{N+6}{4}$ monomials of degree at most four in the affine
coordinates $2S-1$, $2T-1$, $D_1-1, \dots, D_N-1$.  Finally, $I_e$
contains no monomial and no binomial, contains the quadrinomial
$\Omega$, and contains a trinomial if and only if $P_e$ has an
integral zero, which by~\eqref{eq:universal-polynomial} is the case if
and only if $M_e(0)$ halts.
\end{proof}

Four observations come for free with the universal family.

\textbf{Coefficients.}
Only the rational coefficients of the generators depend on $e$ in an
essential way, and their heights are necessarily unbounded: with
bounded heights there would be only finitely many presentations of
this format, and a finite lookup table would decide~$K_0$.

\textbf{Search bounds.}  No computable function of~$e$ bounds the
exponents of the smallest trinomial in~$I_e$.  Multiplying by a
monomial shifts the support of a trinomial and preserves membership
in~$I_e$, because the variables map to units in the codomain.  Only
the shape of the support matters, and it can be normalized so that
its coordinatewise minimum is zero.  If a total computable function
$B \colon \NN \to \NN$ were such that, whenever $I_e$ contains a
trinomial, it contains one with normalized support in
$[0,B(e)]^{N+2}$, then the finite search through the box would
decide~$K_0$.  Absence of a trinomial has no finite certificate
either.  A trinomial in $I_e$ certifies its own presence, verified by
linear algebra, but a decidable relation $V \subseteq \NN^2$ with
$\tinv(I_e) = 4 \iff \exists c \in \NN : V(e,c)$ would, by trying
all $c$ in order, enumerate the complement of~$K_0$, which is not
computably enumerable.  A certificate scheme for arbitrary ideals
restricts to the family, so none exists.

\textbf{Independence.}
Let $e_0$ code a machine which enumerates formal proofs and halts upon
finding a contradiction in ZFC.
The construction can be formalized in ZFC, and thus
\[
  \tinv(I_{e_0})=4 \iff \text{ZFC is consistent.}
\]
If ZFC is consistent, G\"odel's second incompleteness theorem shows
that ZFC does not prove its own consistency, hence does not prove
$\tinv(I_{e_0})=4$.  The statement $\tinv(I_{e_0})=3$ asserts that some
exponent vectors and rational coefficients form a trinomial in
$I_{e_0}$.  Membership in $I_{e_0}$ is decidable, so this is an
existential arithmetic sentence, and it is false.  If ZFC proves only
true existential arithmetic sentences, then ZFC does not prove
$\tinv(I_{e_0})=3$ either.  Under the latter hypothesis, which implies
the former, the value of $\tinv(I_{e_0})$ is independent of ZFC.

\textbf{A fixed ideal.}
Leave the variable $E$ unspecialized in $U$ and apply
Theorem~\ref{thm:main} once. In the resulting Laurent ideal $J_U$, let $d_E$
denote the exponent coordinate corresponding to~$E$. Then
\[
  e \in K_0 \iff \text{$\tau_d \in J_U$ for some $d$ with $d_E=e$}.
\]
Thus one fixed ideal contains the halting problem in the fibers of its
trinomial exponents.

\section{Two effective cases} \label{sec:effective}

The ideals produced by our reduction are not radical, and this is
essential to our construction.  Their nilpotent structure is required for the quadratic
conditions on exponent vectors.  Theorem~\ref{thm:main} therefore
gives no conclusion for radical ideals.  This matches known
complexity results for ideal membership.
Bayer and Mumford explain in \cite{BayerMumford1993} that the doubly
exponential worst-case examples for ideal membership draw their
strength from embedded components of high multiplicity, while
membership in the radical admits singly exponential degree bounds
through the effective Nullstellensatz of Brownawell and Koll\'ar. We
close with two settings in which independent methods give terminating
algorithms.  The first is radical univariate ideals.

\begin{proposition} \label{prop:squarefree-univariate}
  Let $K$ be a number field and let $0\ne f\in K[x]$ be squarefree. It is
  decidable whether $(f)$ contains a nonzero polynomial with at most three
  terms.
\end{proposition}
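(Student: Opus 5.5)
Since $(f)$ is principal, a polynomial $g$ lies in $(f)$ exactly when $g(\alpha)=0$ for every root $\alpha$ of $f$ in a splitting field $L\supseteq K$; this uses that $f$ is squarefree. We may divide out a power of $x$, because $(f)$ contains $x^k g$ only if it contains $g$ when $x\nmid f$. If $x \mid f$, then $x\cdot$(a short element of $(f/x)$) lies in $(f)$, and conversely any short element of $(f)$ becomes one of $(f/x)$ after cancelling a power of $x$. So we reduce to $f(0)\neq 0$ and look for elements $a + b x^n + c x^m$ with $0<n<m$, or binomials $a + b x^n$, or constants. Monomials occur only if $f$ is constant. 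The binomial case is decidable by the effective binomial-part computation cited in the introduction, or directly: all roots must satisfy $\alpha^n=-a/b$. This forces the ratios of roots to be roots of unity, and $n$ is then bounded in terms of $[L:\QQ]$, since the torsion in $L^\times$ is finite and computable.

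For trinomials, we check the finitely many small degrees by linear algebra on residues modulo $f$, exactly as in the Gröbner argument of the introduction. The whole task is therefore to produce a computable bound $m\le B(f)$ on the exponents of some trinomial in $(f)$ whenever one exists. We distinguish $\deg f\le 2$ from $\deg f\ge 3$. If $\deg f\le 2$, then $(f)$ always contains a trinomial, because any three powers of $x$ are linearly dependent modulo $f$. The answer is then YES, witnessed with $m\le 2$.

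If $\deg f\ge3$, a trinomial in $(f)$ means that the vectors $(1,\alpha^n,\alpha^m)$, for $\alpha$ ranging over the roots, all lie in a common plane $\{a+bX+cY=0\}$ with $abc\ne0$. Pick three distinct roots $\alpha_1,\alpha_2,\alpha_3$. The vanishing of the determinant $\det(1,\alpha_i^n,\alpha_i^m)_{i}$ is an $S$-unit-type equation in the unknowns $\alpha_i^n,\alpha_i^m$, which lie in the finitely generated group $\Gamma\subseteq L^\times$ generated by the roots. Expanding the determinant gives a linear equation with six terms $\pm\alpha_i^n\alpha_j^m$. By the subspace theorem (Evertse–Schlickewei–Schmidt), its nondegenerate solutions are finite in number. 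We stress that the solution count is finite but not effectively bounded. The degenerate solutions, where a proper subsum vanishes, we analyse by hand: they force relations such as $(\alpha_i/\alpha_j)^n=1$ or $\alpha_i^n\alpha_j^m=\alpha_j^n\alpha_i^m$ that are governed by torsion, and these are effectively bounded via the finitely many roots of unity in $L$.

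The main obstacle is effectivity, since the subspace theorem only counts solutions and gives no bound on their size. Here I would invoke the effective results quoted in the introduction instead. One is the theorem of Bilu and Luca, which bounds degree and height of trinomial multiples under conditions on the roots. Another is the Baker-type treatment of three-term unit equations $\lambda u+\mu v=1$, which are effective by linear forms in logarithms. For this I would reduce to a three-term equation. Using the plane condition for a pair of roots against the coefficients, eliminate $a,b,c$: it suffices that $(\alpha_1^n-\alpha_2^n)(\alpha_1^m-\alpha_3^m)=(\alpha_1^n-\alpha_3^n)(\alpha_1^m-\alpha_2^m)$. Then split according to whether the roots are multiplicatively dependent: the dependent cases are handled by torsion bounds, and the independent ones by Baker-type bounds, which are effective for unit equations in few terms over number fields. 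This yields a computable $B(f)$, after which finite search decides. If the reduction to three terms fails in some configuration, the fallback is to cite Bilu–Luca directly for the squarefree case, treating cyclotomic factors separately via Gyory–Schinzel's characterization of divisors of quadratics in $x^r$.
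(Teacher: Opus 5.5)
Your preliminary reductions (cancelling powers of $x$, the binomial case, $\deg f\le 2$) are fine. The effective core for $\deg f\ge 3$, however, does not work.

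The identity $(\alpha_1^n-\alpha_2^n)(\alpha_1^m-\alpha_3^m)=(\alpha_1^n-\alpha_3^n)(\alpha_1^m-\alpha_2^m)$ is not a reduction to three terms. After cancelling $\alpha_1^{n+m}$ it is literally the six-term expansion of $\det(1,\alpha_i^n,\alpha_i^m)_i=0$ again. Baker's method is effective for $\lambda u+\mu v=1$, i.e.\ two unknown units. No effective bound is known in general for unit equations with three or more unknowns, so the Baker-type bounds you invoke are not available.

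Your treatment of degenerate solutions also fails. Multiplicative dependence among roots need not come from torsion (think of roots $2$ and $4$). When it does come from torsion, the degenerate solutions are typically infinite, not bounded. For $f=(x^2-2)(x-3)$, every pair of even exponents makes the determinant vanish, since the rows for $\pm\sqrt2$ coincide.

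The missing idea is the dichotomy the paper's proof is built on. Partition the roots by $\alpha\sim\beta$ if and only if $\alpha/\beta$ is a root of unity; this is computable.

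\emph{At most two classes.} No search bound is needed. There is a computable $r$ making $\alpha^r$ constant on each class, so the squarefree part $g$ of $\operatorname{Res}_x(f(x),y-x^r)$ has degree at most two. Since $f$ is squarefree, $g(x^r)$ is a multiple of $f$ with at most three terms, and the answer is YES. For the example above this gives $x^4-11x^2+18$. This case also subsumes your $\deg f\le 2$ case.

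\emph{At least three classes.} No binomial can occur. Bilu--Luca give an explicit bound on the degree of every monic trinomial vanishing on the roots, and finite search with linear algebra modulo $f$ decides.

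Your fallback names the right theorem but the wrong exceptional case. Cyclotomic factors are harmless, since all roots of unity lie in a single class. Meanwhile $(x^2-2)(x-3)$ has no cyclotomic factor and is precisely a case where Bilu--Luca does not apply. Gy{\H{o}}ry--Schinzel's characterization alone is not a decision procedure; the class count is what makes the distinction effective.
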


\begin{proof}
  Multiplication by a power of $x$ does not change the number of
  terms, so we may remove the factor $x$ from $f$ and assume that
  $f(0)\ne0$. Let $\Omega\subseteq\ol K^\times$ be the set of roots
  of $f$. Partition $\Omega$ by the equivalence relation under
  which \(\alpha\sim\beta\) if and only if \(\alpha/\beta\) is a
  root of unity.  This partition and the orders of the roots of
  unity can be computed from the algebraic numbers in~$\Omega$.

  Suppose first that $\Omega$ has at most two equivalence classes. We
  can compute $r>0$ such that $\alpha^r$ is constant on each class.
  The resultant $h(y)=\operatorname{Res}_x\bigl(f(x),\,y-x^r\bigr)$
  lies in $K[y]$ and its roots are exactly the $\alpha^r$ with
  $\alpha\in\Omega$. Its squarefree part $g\in K[y]$ is computable
  and has degree $\bigl|\Set{\alpha^r:\alpha\in\Omega}\bigr|\le 2$
  because $\alpha^r$ takes one value on each equivalence class.
  Now $g(x^r)$ vanishes on $\Omega$, and since $f$ is squarefree,
  $g(x^r)$ is a multiple of $f$ with at most three terms.

  When $\Omega$ has at least three equivalence classes, Bilu and
  Luca give an explicit computable degree bound for every monic
  trinomial vanishing on~$\Omega$~\cite[Theorem~1.2]{BiluLuca2020}.
  Then simple enumeration of the finitely many exponent pairs
  within the bound plus linear algebra in $K[x]/(f)$ decides
  whether $(f)$ contains a polynomial with at most three terms.
\end{proof}

The second result is over finite fields.
\begin{proposition} \label{prop:finite-fields}
  Let $K$ be a finite field. There is an algorithm which, given an ideal
  $I\subseteq K[x_1,\ldots,x_n]$ and a positive integer $k$, decides whether
  $\tinv(I)\leq k$.
\end{proposition}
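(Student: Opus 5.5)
We plan to reduce the question to unit equations in a finite algebra, and then to use the result of Dong and Shafrir cited in the introduction.

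\emph{Step 1: pass to the Laurent ring.} Since monomials are units there, $\tinv(I) \le k$ holds if and only if the extension $IL$ in $L = K[x_1^{\pm1},\dots,x_n^{\pm1}]$ contains a nonzero Laurent polynomial with at most $k$ terms. Compute $I : (x_1\cdots x_n)^\infty$ with Gröbner bases. If it is the unit ideal, then $I$ contains a monomial and we answer YES. Otherwise set $A = L/IL = K[x]/(I:(x_1\cdots x_n)^\infty)[x^{-1}]$, and let $\Gamma \subseteq A^\times$ be the image of the group of Laurent monomials; it is generated by the images of $x_1,\dots,x_n$. Because $K$ is finite, we absorb coefficients: a nonzero $k'$-nomial $\sum_{j=1}^{k'} c_j x^{a_j} \in IL$ with $k' \le k$ is a solution of
\[
  \gamma_1 + \dots + \gamma_{k'} = 0, \qquad \gamma_j \in \Gamma' \defas K^\times\cdot\Gamma \subseteq A^\times .
\]
Here $\Gamma'$ is finitely generated, with generators $x_1,\dots,x_n$ and a generator of $K^\times$. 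Such a solution must come from distinct exponent vectors $a_j$ and nonzero $c_j$. Since $K^\times$ is finite, the condition ``$\gamma_j$ comes from a term with exponent $a_j$'' is harmless: for each $\gamma_j$ we pick the lift $x^{a_j}$ and a scalar. The problem therefore becomes: does the linear equation $\sum_{j} \gamma_j = 0$ have a solution in $\Gamma'$ whose lifts have pairwise distinct exponents?

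\emph{Step 2: decidability via Dong--Shafrir.} I expect their theorem to say the following. For a finitely generated commutative algebra $A$ over a finite field, given by a presentation, and a finitely generated subgroup $G \subseteq A^\times$ given by generators, the set of exponent tuples $(m_1,\dots,m_{k}) \in (\ZZ^{r})^{k}$ with $\sum_j \prod_i g_i^{m_{j,i}} = 0$ is effectively $p$-automatic, or at least effectively semilinear-like. Consequently emptiness of its intersection with any effectively semilinear (Presburger-definable) set is decidable. We apply this with $G = \Gamma'$ and generators $x_1,\dots,x_n,\omega$, where $\omega$ generates $K^\times$. The side condition ``the $n$-coordinates of the exponent vectors are pairwise distinct'' is Presburger-definable, so the intersection is again effectively automatic and its nonemptiness is decidable. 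We run this for each $k' = 1,\dots,k$.

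\emph{Main obstacle.} The real difficulty is matching the hypotheses of Dong--Shafrir exactly: whether $A$ may be non-reduced, whether they handle arbitrary finitely generated $A$ or only domains or fields, and in what form the solution set is output. If they only treat fields or domains, I would first reduce $A$ using a primary decomposition of $IL$. The nilpotent part of $A$ is handled by the observation that in characteristic $p$ the Frobenius kills nilpotents after finitely many steps, so passing to $p^e$-th powers makes $1+\mathrm{nil}$ a finite $p$-group. This folds into the finite torsion part of $\Gamma'$. The equation in $A$ then becomes a system over the residue domains, intersected with finitely many congruence conditions on exponents, and all of these remain automatic. I would check carefully that this reduction is effective. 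It is precisely where characteristic zero fails: there $1+\mathrm{nil}$ is torsion-free, which is what Section~\ref{sec:quadrics} exploits.
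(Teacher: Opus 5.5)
Your proposal is correct and follows essentially the paper's route: pass to $A = L/IL$, obtain from Dong--Shafrir an effectively $p$-normal (hence $p$-automatic) set of exponent solutions, remove the non-distinct tuples by Boolean closure, and test emptiness; the only difference is that the paper enumerates the finitely many coefficient tuples in $(K^\times)^{r-1}$ after normalizing one term to $1$, whereas you adjoin a generator $\omega$ of $K^\times$ as an extra unknown exponent. Your ``main obstacle'' does not arise, because \cite[Theorem~1.3]{DongShafrir} is formulated for finitely presented modules over $\FF_p[x_1^{\pm1},\ldots,x_n^{\pm1}]$ (the paper makes $A$ such a module via an $\FF_p$-basis of $K$; in your variant, let one more Laurent variable act by $\omega$, which makes $A$ a cyclic module), so a non-reduced $A$ needs no primary decomposition or Frobenius reduction.
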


\begin{proof}
  As this is decidable, assume that $I$ contains no monomial.  Let
  $u = x_1 \cdots x_n$ and $L = K[x_1^{\pm1},\ldots,x_n^{\pm1}]$.
  Then $\tinv(I) = \tinv(I : u^\infty)$ since the inclusion
  $I \subseteq I : u^\infty$ gives $\geq$, and if $f \in I : u^\infty$
  then $u^q f \in I$ for some $q$, with support a translate of that
  of~$f$, which gives~$\leq$.  Every element of $IL$ is a monomial
  times an element of $I : u^\infty = IL \cap K[x_1, \ldots, x_n]$,
  so $\tinv(I)$ is the least number of terms of a nonzero element
  of~$IL$, and we may work in the Laurent quotient $A = L/IL \neq 0$.
  After translating one exponent and scaling the coefficient, an
  $r$-term relation in $A$ has the form
  \begin{equation}
    \label{eq:s-unit}
    1+c_1x^{a_1}+\cdots+c_{r-1}x^{a_{r-1}}=0
    \qquad (2\leq r\leq k),
  \end{equation}
  where the $c_i\in K^\times$ and the vectors
  $0,a_1,\ldots,a_{r-1}\in\ZZ^n$ are distinct. There are only finitely
  many coefficient tuples.  Write $K=\FF_{p^{e}}$ and let
  $\omega_1, \dots, \omega_e$ be a basis of $K$ over~$\FF_p$.  Then $L$
  is a free module with basis $\omega_1, \dots, \omega_e$ over
  $L_p = \FF_p[x_1^{\pm1},\ldots,x_n^{\pm1}]$, and $IL$ is the
  $L_p$-submodule generated by the products $\omega_j g_i$ of the basis
  elements with generators $g_1, \dots, g_s$ of~$I$.  Hence $A$ is a
  finitely presented $L_p$-module with a computable presentation, and
  \eqref{eq:s-unit} is the $S$-unit equation
  $x^{a_1} m_1 + \dots + x^{a_{r-1}} m_{r-1} = m_0$ over $A$ in the
  sense of~\cite{DongShafrir}, with $m_0 = -1$ and $m_i = c_i$.  For
  each coefficient tuple, its exponent solutions form an
  \emph{effective $p$-normal set} by~\cite[Theorem~1.3]{DongShafrir}.
  Such a set is effectively \emph{$p$-automatic}, meaning that a finite automaton can be
  computed that accepts it.  Effective automatic sets are closed under
  Boolean operations~\cite[Lemma~2.1]{DongShafrir}. We may therefore
  impose the conditions $a_i\ne0$ and $a_i\ne a_j$ for $i\ne j$,
  keeping a $p$-automatic exponent solution set, since the excluded
  sets are subgroups of $\ZZ^{(r-1)n}$ and subgroups are
  $p$-automatic~\cite[Section~2]{DongShafrir}.  Emptiness of the
  resulting automatic set is decidable.  Repeating this test for the
  finitely many values of $r$ and coefficient tuples proves the claim.
\end{proof}

\section*{Acknowledgements}

\setlength{\intextsep}{5pt}%
\setlength{\columnsep}{5pt}%
\begin{wrapfigure}{R}{0.12\linewidth}
\vspace{-.5\baselineskip}%
\centering%
\href{https://doi.org/10.3030/101110545}{%
\includegraphics[width=0.9\linewidth]{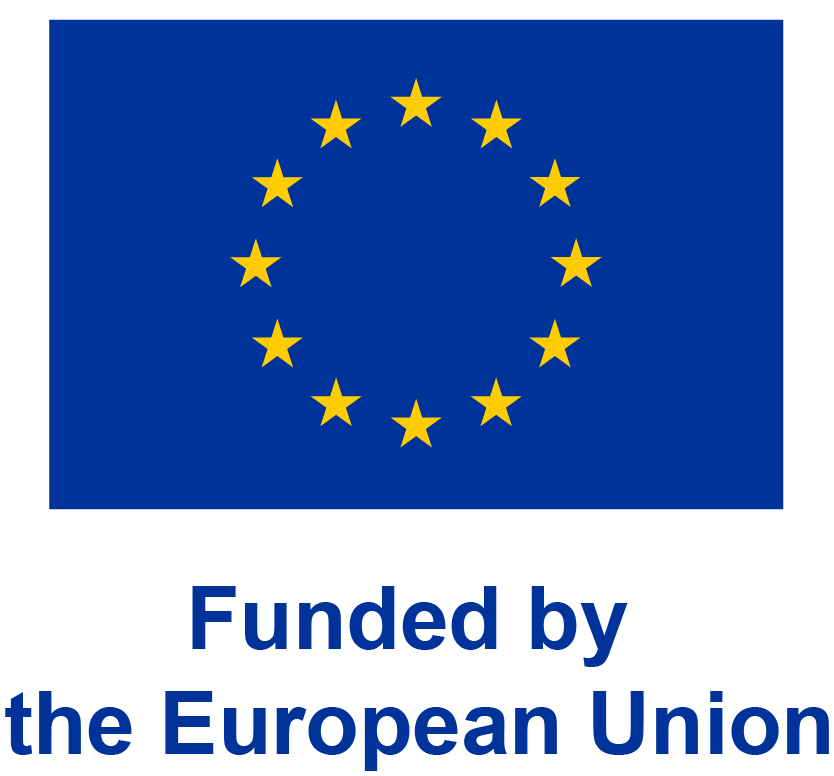}%
}
\end{wrapfigure}
T.B.~was funded by the European Union's Horizon 2020 research
and innovation programme under the Marie Skłodowska-Curie grant agreement
No.~101110545.
T.K.~and A.H.~are supported by the Deutsche Forschungsgemeinschaft
(DFG, German Research Foundation) within SPP~2458 ``Combinatorial
Synergies''\,-- 539866293.

\hphantom{a} \\
\hphantom{a}

\printbibliography

\end{document}